
\documentclass{gAPA2e}

\usepackage{subfigure}

\theoremstyle{plain}
\newtheorem{theorem}{Theorem}[section]
\newtheorem{corollary}[theorem]{Corollary}

\theoremstyle{remark}

\theoremstyle{definition}

\usepackage{color}
\usepackage{mathtools}

\usepackage{float}

\usepackage{autonum}

\newcommand{{\cX}}{\mathcal X}
\newcommand{\cY}{\mathcal Y}

\newcommand{\dd}{{\mathrm d}}

\newcommand{\bu}{\overline{u}}

\newcommand{\Yl}{\cY^{\|}}
\newcommand{\Yp}{\cY^{\perp}}

\newcommand{\rp}{r_{\perp}}

\newcommand{\dll}{\vartheta^{(j)}_{\|}}
\newcommand{\dpp}{\vartheta^{(j)}_{\perp}}

\newcommand{\beq}{\begin{equation}}
\newcommand{\eeq}{\end{equation}}
\newcommand{\bea}{\begin{eqnarray}}
\newcommand{\eea}{\end{eqnarray}}
\newcommand{\beas}{\begin{eqnarray*}}
\newcommand{\eeas}{\end{eqnarray*}}


\def\b1{{\bf 1}}

\def\cO{{\mathcal O}}

\newcommand{\rd}{\mathrm{d}}


\newcommand{\cls}[1]{{\color{black}{#1}}}

\newcommand{\id}{\operatornamewithlimits{id}}

\usepackage{braket}

\begin{document}



\title{{\itshape Convergence Analysis of Ensemble Kalman Inversion:\\ The Linear, Noisy Case}}

\author{C. Schillings$^{\rm a}$$^{\ast}$\thanks{$^\ast$Corresponding author. Email: c.schillings@uni-mannheim.de
\vspace{6pt}} and A.M. Stuart$^{\rm b}$\\\vspace{6pt}  $^{a}${\em{Institute for Mathematics, 
University of Mannheim,
A5, 6, 68131 Mannheim, 
Germany}};
$^{b}${\em{Department of Computing and Mathematical Sciences, California Institute of Technology, CA 91125, USA}}\\ }

\maketitle

\begin{abstract}
We present an analysis of ensemble Kalman inversion, based on the continuous time limit of the algorithm. The analysis of the dynamical behaviour of the ensemble allows \cls{us}
to establish well-posedness and convergence results for a fixed ensemble size. We will build on the results presented in \cite{AndrewC2017} and generalise them to the case of noisy observational data, in particular the influence of the noise on the convergence will be investigated, both theoretically and numerically. 
\cls{We focus on linear inverse problems where a very complete theoretical
analysis is possible.}
\begin{keywords}Bayesian Inverse Problems, Ensemble Kalman Filter, Parameter Identification
\end{keywords}

\begin{classcode} 65N21, 62F15, 65N75 \end{classcode}

\end{abstract}

\section{Introduction}

\cls{The Kalman filter has been enormously successful since its
introduction in the 1960s as a state estimation tool for
linear Gaussian systems in both discrete or continuous time;
see \cite[sections 4.1 and 8.1]{KLS} and the references therein. 
A natural generalisation
to nonlinear state estimation is the extended Kalman filter
\cite[sections 4.2.2 and 8.2.2]{KLS} and this was proposed
as a method for numerical weather prediction in \cite{Ghil81}.
The ensemble Kalman filter  \cite{Evensen06} 
was introduced in state estimation problems
as a way of circumventing the need to compute enormous covariance matrices
when applying the extended Kalman filter to large problems such
as those arising in atmosphere or ocean dynamics \cite{evensen2003ensemble,evensen1996assimilation,houtekamer2001sequential}. 
The inherent parallelisability of the method, together with its effectiveness
in state estimation, has made it very popular and its use spread outside
the atmosphere-ocean sciences community. In particular it has been
widely adopted by the oil industry for subsurface 
inversion \cite{oliver2008inverse}. Building on this applied work in subsurface
inversion, in \cite{iglesias2013ensemble} a generic ensemble Kalman inversion 
tool for inverse problems in the form
$$y={\mathcal G}(u)+\eta$$
was formulated;  here the objective is to recover $u$ from $y$, a noisy observation of
${\mathcal G}(u)$ and $\eta$ denotes the noise. 
Despite documented success as a solver for such inverse problems,
there is very little analysis of the algorithm. Essentially
two facts are known about the finite ensemble size regime in which it is
used: that the basic form of the iteration preserves
the linear span of the initial 
ensemble \cite{li2007iterative,iglesias2013ensemble}; and that for the linear
noise free problem the method is a discretisation of a set of interacting
gradient flows for the output least squares objective function associated
with the linear inverse problem \cite{AndrewC2017}. The combination 
of these two facts allows an almost complete analysis of the algorithm
in the setting of the linear noise free inverse problem. The purpose
of this paper is to extend those results to include the effect of noise.

It is of interest to give some insight into where the gradient flow
structure comes from in this problem. Inspection of the Kalman-Bucy
filter \cite[section 8.1]{KLS} reveals that when the drift of the signal is
zero and the observed data is constant, then the equation for the
mean is a gradient flow for the output least squares function related
to the observation operator, preconditioned by the  covariance.
In \cite{bergemann2010localization} this observation was used to
create algorithms for the analysis step in state estimation problems
employing the Kalman filter, essentially by replacing the Kalman
variance by the empirical covariance; a resulting gradient structure
was noted and exploited. The Kalman-Bucy filter
with no drift in the signal, and the analysis phase of the
general filter with linear observations, are closely related to solution of a linear inverse problem. As a consequence it not unnatural that
in \cite{AndrewC2017} it was demonstrated that the continuous time limit
of the ensemble Kalman inversion algorithm is an interacting set of
gradient flows.

There are two ways of viewing algorithms for ensemble Kalman inversion. The
first is simply as derivative free optimisers, in which the ensemble is used as a proxy
for derivative information; this is the view put forward in
\cite{iglesias2013ensemble}. The second is as a method to solve
a Bayesian inverse problem. We adopt the first viewpoint throughout the
paper, essentially because, as the literature survey in the next
paragraph explains,
there is little hope of rigorous uncertainty quantification via
ensemble methods, except for linear problems. And, although our analysis
is limited to the linear problem, our goal is to obtain insight into
ensemble inversion methods in general.

The Bayesian approach to distributed parameter
inversion allows incorporation of both model
and data uncertainties and leads to a complete characterisation of 
the uncertainty via the posterior distribution; 
see \cite{Stuart_2010,DashtiStuart}. However, for computationally 
intensive applications, the computation or approximation of the posterior 
is, even with today's supercomputers, often intractable. Thus
ensemble inversion provides an attractive alternative which, through the
ensemble, may include some information about uncertainties. 
The low computational costs, the straightforward implementation and its non-intrusive nature  make the method appealing.  In the state estimation 
context \cite{reich2015probabilistic}, 
well-posedness results for the EnKF can be found in \cite{kelly2014well,tongnonlinear,tonginfl,Kelly25082015} and a large-time convergence analysis in the case of a fully observed system is presented in \cite{2016arXiv161206065D}; other
interesting methods and analyses may be found in \cite{bergemann2010localization,bergemann2010mollified, reich2011}. The analysis of the large ensemble size limit can be found in \cite{kwiatkowski2015convergence, gratton2014convergence}. For inverse 
problems, the large ensemble size limit is studied in \cite{ernst2015analysis}
and, importantly for the optimisation perspective we take in this paper, demonstrated
to differ from the true posterior distribution except in the linear case.
In ensemble inversion,
the connection to deterministic regularisation techniques and step-size strategies for nonlinear forward problems is developed 
in \cite{iglesias2013ensemble, iglesias2014iterative, iglesias2015regularizing}. }

The linear inverse problem which we study in this paper is defined as follows:
let $\cls{\cX}$ \cls{denote a separable Hilbert space}. Furthermore, we denote by \cls{$A\in\mathcal L(\cls{\cX},\mathbb R^K)$} the forward response operator mapping from the parameter space $\cls{\cX}$ to the data space \cls{$\mathbb R^K$}. The observations are assumed to be finite-dimensional, i.e. \cls{the forward response operator maps to $\mathbb R^K$, where $K\in\mathbb N$ denotes the number of observations}. The goal of computation is to recover the unknown parameters $u$ from noisy observations $y$, where
\begin{equation}
\label{eq:ip}
y=Au+\eta.
\end{equation}
The noise $\eta$ in the observations is assumed to be normally distributed with $\eta\sim\mathcal N(0,\Gamma)$, $\Gamma\in\mathbb R^{K\times K}$ symmetric positive definite. \cls{In the Bayesian setting, the unknown parameter $u$ is 
interpreted as a random variable or random field, distributed according to prior 
$\mu_0$. The Bayesian solution to the inverse problem is the conditional distribution
of $u$ given $y$, and to define this it is necessary to make an assumption
on the a priori dependence structure between $u$ and $\eta;$ 
it is often assumed that the noise $\eta$ is independent of $u$. 
As mentioned above, in this paper we present an analysis of ensemble inversion
viewed as a minimisation method applied to the least-squares functional}
\begin{equation}
\label{eq:lsq}
\Phi(u;y^\dagger)=\frac12\|y^\dagger-Au\|_{\Gamma}^2\,,
\end{equation}
\cls{where the norm $\|\cdot \|_\Gamma=\|\Gamma^{-1/2}\cdot\|_2$ corresponds to the Euclidean norm weighted by the square-root of the inverse noise covariance matrix. Accordingly, we define by $\langle\cdot,\cdot\rangle_\Gamma=\langle \Gamma^{-1/2}\cdot,\Gamma^{-1/2}\cdot\rangle$ the corresponding inner product. 
The realisation of the random variable $y$, i.e. the observed data, is denoted by $y^\dagger$. The prior $\mu_0$ plays a role
in the optimisation perspective as the initial ensemble is typically drawn
from $\mu_0$.}

\cls{In order to facilitate analysis we work with continuous time
limit of the ensemble inversion algorithm \cite{AndrewC2017}.
The classic implementation of 
ensemble Kalman inversion, in which the observed data $y^\dagger$ is 
perturbed by the addition of independent draws from the distribution of $\eta$, 
leads to a stochastic differential equation (SDE) limit; the simplification in
which the observed data $y^\dagger$ is unperturbed leads to an ordinary 
differential equation (ODE) in the limit. We work with the ODE limit in this paper.
What distinguishes our analysis from that appearing in \cite{AndrewC2017} is that
we study the case where the observed data $y^\dagger$ appearing in the ODE is assumed
to contain noise-- i.e. it is not simply the image of a truth $u^\dagger$ under $A$; we
refer to this as the noisy, linear setting.} 

The paper is structured as follows.
In Section \ref{sec:EnKF}, we introduce \cls{ensemble Kalman inversion}
and derive the continuous time limit of the algorithm. 
We study the properties of the method by analysing the dynamical behaviour of the ensemble and derive convergence results by considering the long-time behaviour. We present,
in Section \ref{sec:conv}, well-posedness results, quantification of the ensemble collapse and convergence results for the noisy, linear setting. Numerical experiments illustrating the findings are presented in Section \ref{sec:N}.

\section{The \cls{Ensemble Kalman Inversion and its Continuous Time Limit}}\label{sec:EnKF}

The \cls{ensemble inversion method} that we study is given 
in \cite{iglesias2013ensemble}. \cls{By introducing an artificial time $h=1/N$ for a given integer $N$, the method propagates an ensemble $\{u_n^{(j)}\}_{n=0}^N$ of $J$ particles, $J\in \mathbb N$, at discrete time $nh$ into an ensemble at time $(n+1)h$} according to the formula 
\begin{equation}
\label{eq:EnKFiter}
 {u_{n+1}^{(j)}=u_{n}^{(j)}+C(u_{n})A^*(AC(u_{n})A^*+\frac{1}{h}\Gamma)^{-1}(\cls{y^\dagger}- Au_n^{(j)})}. 
\end{equation}
Here
$$\bar u_{n}=\frac1J\sum_{j=1}^J u_{n}^{(j)}, \quad  C(u_{n})=\frac{1}{J}\sum_{j=1}^J\bigl(u^{(j)}_n-\bu_n\bigr)
\otimes\bigl(u^{(j)}_n-\bu_n\bigr).$$ 
The analysis we present here relies on the continuous time limit of \cls{ensemble Kalman inversion}. We therefore interpret the iterates $u_n^{(j)}$ as a discretisation of a continuous function $u^{(j)}(nh)$. In this context the argument for the appearance of
scaling $h^{-1}$ multiplying $\Gamma$ in the update formula is given
in \cite{iglesias2013ensemble}. 
\cls{If we let $h \to 0$ and interpret the iterations as a timestepping scheme, then 
the continuous time limit is given by}
   \begin{equation}\label{eq:lode}
   \frac{\dd u^{(j)}}{\dd t}=\frac{1}{J}\sum_{k=1}^J \bigl\langle A(u^{(k)}-\bu),
\cls{y^\dagger}-Au^{(j)}\bigr\rangle_{\Gamma} \bigl(u^{(k)}-\bu\bigr), \quad j=1,\cdots, J.
  \end{equation}
  or equivalently
  \begin{equation}\label{eq:lode2}   \frac{\dd u^{(j)}}{\dd t}=-C(u)D_{u}\Phi(u^{(j)};\cls{y^\dagger})
\end{equation}
\cls{with potential 
$\Phi(u;y^\dagger)$ given by \eqref{eq:lsq}.}
Equation \eqref{eq:lode} reveals the well-known subspace property of \cls{ensemble Kalman
inversion} \cite{iglesias2013ensemble}, since the vector field is in the linear span of the ensemble itself. \cls{We re-emphasize that the derivation is based on the simplified
version of the classic ensemble Kalman inversion scheme in which perturbations 
of the observed data $y^\dagger$ are set to zero.}

\section{Convergence Analysis}\label{sec:conv}

\cls{This section is devoted to a generalisation of the results} from \cite{AndrewC2017} 
to allow for noise in the observational data; specifically we consider the case that the observational data $y^\dagger$ is polluted by additive noise $\eta^\dagger \in \mathbb R^K$ in the following way:
\[
y^\dagger=Au^\dagger+\eta^\dagger\;,
\] 
where $u^\dagger$ denotes the truth and \cls{$\eta^\dagger$ a realisation of noise.}
In subsection \ref{ssec:3.1}
we will demonstrate the undesirable effect of noise on the inversion methodology,
and in subsection \ref{ssec:3.2}
we will suggest a stopping criterion to ameliorate the effect.

\subsection{Analysis of \cls{Ensemble Kalman Inversion} With Noisy Data}
\label{ssec:3.1}

Following the notation introduced in \cite{AndrewC2017}, we
introduce the quantities
\begin{align}
&e^{(j)}=u^{(j)}-\bar u, \quad r^{(j)}=u^{(j)}-u^\dagger\ \  j=1,\ldots,J\\
&E_{lj}=\langle Ae^{(l)},Ae^{(j)}\rangle_{\Gamma}, \ \
 R_{lj}=\langle Ar^{(l)},Ar^{(j)}\rangle_{\Gamma}, \ \
 F_{lj}=\langle Ar^{(l)},Ae^{(j)}\rangle_{\Gamma}\ \  l,j=1,\ldots,J\,,
\end{align} 
and the misfit $\vartheta^{(j)}=Au^{(j)}-y^\dagger\cls{=A(u^{(j)}-u^\dagger)-\eta^\dagger}, \ j=1,\ldots,J$. 
The quantity $e^{(j)}$ measures, for each particle $j$, the difference to the 
empirical mean (computed from the ensemble) and the quantity $r^{(j)}$ 
measures the difference from particle $j$ to the truth\cls{, i.e. the residuals}. The matrix-valued quantities describe the interaction of these quantities mapped to the observation space. 
Note that the \cls{mapped residuals $Ar^{(j)}=A(u^{(j)}-u^\dagger)$
are related to the misfit by} 
\begin{equation}\label{eq:misfitd}
\vartheta^{(j)}=Ar^{(j)}-\eta^\dagger \ \  j=1,\ldots,J\;;
	\end{equation}
	from this it is apaprent that
the misfit is a finite dimensional quantity in $\mathbb R^K$. Furthermore, we define the matrix-valued quantity $D$ by
	\begin{equation}
	D_{lj}=\langle \vartheta^{(l)}, Ae^{(j)}\rangle_\Gamma\qquad l,j=1,\ldots,J\;.
	\end{equation}

\begin{theorem}\label{t:1pert}
Let $y^\dagger$ denote the perturbed image of a truth \cls{$u^\dagger \in \cX: 
y^\dagger=Au^\dagger+\eta^\dagger$} for some $\eta^\dagger \in \mathbb R^K$. 
Furthermore, an initial ensemble $u^{(j)}(0) \in \cls{\cX}$
for $j=1,\dots, J$ is given, and we denote by $\cls{\mathcal X_0}$ the linear span of the $\{u^{(j)}(0)\}_{j=1}^J.$ Then, equation \eqref{eq:lode} has a unique solution 
$u^{(j)}(\cdot) \in C([0,T);\cls{\mathcal X_0})$ for $j=1,\dots, J.$
\end{theorem}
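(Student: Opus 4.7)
The plan is to exploit the subspace property of the vector field in \eqref{eq:lode} to reduce the problem to a finite-dimensional polynomial ODE, to establish local existence and uniqueness by Picard--Lindel\"of, and then to rule out finite-time blowup using the gradient-flow representation \eqref{eq:lode2} combined with a Gronwall argument in the unobservable directions.

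First, I would observe that the right-hand side of \eqref{eq:lode} is, at every time $t$, a linear combination of the centred ensemble $\{u^{(k)}-\bar u\}_{k=1}^J$, so any solution starting in $\mathcal{X}_0^J$ remains in $\mathcal{X}_0^J$. Since $\dim \mathcal{X}_0 \le J$, fixing a basis of $\mathcal{X}_0$ identifies the system with an ODE on $\mathbb{R}^{nJ}$, $n=\dim \mathcal{X}_0$, whose right-hand side is a cubic polynomial in the coordinates. Picard--Lindel\"of then yields a unique solution $u^{(j)}\in C([0,T_{\max});\mathcal{X}_0)$ on some maximal forward interval.

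To show $T_{\max}=\infty$, I would exploit the gradient-flow form \eqref{eq:lode2} to compute
\[
\frac{\mathrm{d}}{\mathrm{d}t}\Phi(u^{(j)};y^\dagger) = -\bigl\langle D_u\Phi(u^{(j)}),\,C(u)\,D_u\Phi(u^{(j)})\bigr\rangle \le 0,
\]
using the fact that $C(u)$ is positive semi-definite. Hence $\Phi(u^{(j)};y^\dagger)$, and therefore $\|Au^{(j)}(t)-y^\dagger\|_\Gamma$, is uniformly bounded by its initial value.

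The hard part will be upgrading this bound on $Au^{(j)}$ to a bound on $u^{(j)}$ itself, since $A$ restricted to the finite-dimensional $\mathcal{X}_0$ may have a nontrivial kernel. I would split $\mathcal{X}_0 = \mathcal{X}_0^\parallel \oplus \mathcal{X}_0^\perp$ with $\mathcal{X}_0^\perp := \ker A \cap \mathcal{X}_0$, decompose $u^{(j)} = u^{(j)}_\parallel + u^{(j)}_\perp$, and note that $A$ is injective on $\mathcal{X}_0^\parallel$, so the misfit bound controls $u^{(j)}_\parallel$. The inner-product coefficients in \eqref{eq:lode} involve only the parallel components, so projecting onto $\mathcal{X}_0^\perp$ yields a linear system
\[
\frac{\mathrm{d}u^{(j)}_\perp}{\mathrm{d}t} = \frac{1}{J}\sum_{k=1}^J \alpha_{jk}(t)\bigl(u^{(k)}_\perp - \bar u_\perp\bigr),
\]
whose coefficients $\alpha_{jk}(t)$ are bounded in terms of the already-controlled parallel quantities. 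Gronwall's inequality then bounds $u^{(j)}_\perp$ on any finite interval, excluding finite-time blowup and yielding $u^{(j)} \in C([0,T);\mathcal{X}_0)$ for every $T>0$.
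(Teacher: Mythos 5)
Your proposal is correct, and its global-existence step is organised differently from the paper's, even though both arguments rest on the same Lyapunov structure. The shared core is your observation that $\tfrac{\dd}{\dd t}\Phi(u^{(j)};y^\dagger)=-\langle D_u\Phi(u^{(j)}),C(u)D_u\Phi(u^{(j)})\rangle\le 0$; in the paper's notation this is exactly $\tfrac12\tfrac{\dd}{\dd t}\|\vartheta^{(j)}\|_\Gamma^2=-\tfrac1J\sum_k D_{jk}^2\le 0$, so the misfit bound is identical. Where you diverge is in converting that bound into non-blowup of $u^{(j)}$ itself. The paper first derives the closed Riccati equation $\tfrac{\dd}{\dd t}E=-\tfrac2J E^2$ and solves it explicitly (eigenvalues $\lambda^{(j)}(t)=(\tfrac2J t+1/\lambda_0^{(j)})^{-1}$), bounds $D_{jk}$ by Cauchy--Schwarz from the misfit and $\|Ae^{(j)}\|_\Gamma$, and then reads \eqref{eq:lode} as the linear system $\dot u^{(j)}=-\tfrac1J\sum_k D_{jk}u^{(k)}$ with bounded coefficients. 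You instead split $\mathcal X_0=\mathcal X_0^\parallel\oplus(\ker A\cap\mathcal X_0)$, control $u^{(j)}_\parallel$ directly from the misfit via norm equivalence on the finite-dimensional space on which $A$ is injective, and run Gronwall only in the kernel directions, where the dynamics is linear with coefficients $\alpha_{jk}=-D_{jk}$ bounded by the already-controlled observable quantities. Both routes are sound; yours is leaner for bare well-posedness and makes explicit the Gronwall step that the paper leaves implicit, while the paper's explicit solution of the $E$-dynamics is not wasted effort --- it is precisely what delivers the quantitative $\mathcal O(Jt^{-1})$ collapse rate of Corollary \ref{cor:enscoll} and the decay $D_{ij}=\mathcal O(t^{-1/2})$ used later, which your argument does not provide. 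One small point to make rigorous in your write-up: the bound on $\alpha_{jk}$ requires a bound on $\|Ae^{(k)}\|_\Gamma$, which does follow from the misfit bound since $Ae^{(k)}=\vartheta^{(k)}-\tfrac1J\sum_l\vartheta^{(l)}$, but this should be stated rather than left implicit in the phrase ``already-controlled parallel quantities.''
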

\begin{proof}
\cls{The preservation of $\mathcal X_0$ by the ensemble Kalman iteration, and its
continuous time limit, is not affacted by the presence of noise in the data
$y^dagger.$ Each particle $u^{(j)}$ satisfies}
\begin{eqnarray}
\frac{\dd  u^{(j)}}{\dd t}&=& -\frac{1}{J}\sum_{k=1}^J F_{jk}e^{(k)}+\frac1J\sum_{k=1}^J\cls{\langle \eta^\dagger, Ae^{(k)}\rangle_\Gamma} e^{(k)}\nonumber\\
&=& \cls{-\frac{1}{J}\sum_{k=1}^J D_{jk}e^{(k)}}\nonumber\\
&=& -\frac{1}{J}\sum_{k=1}^J D_{jk}u^{(k)}\label{eq:uD}\,.
\end{eqnarray}
\cls{We have used the fact that $\sum_{k=1}^J D_{jk}=0$, thus $\sum_{k=1}^J D_{jk}\bar u=0$.}
\cls{The preservation of  $\mathcal X_0$} and the local Lipschitz continuity of the right-hand side of \eqref{eq:uD} ensures the local existence of a solution in $C([0,T);\cls{\mathcal X_0})$ for $T>0$.
To establish global existence of solutions, we now 
show the boundedness of the right-hand side of \eqref{eq:uD}. 

The following differential equation holds for the quantity $e^{(j)}$: 
\begin{equation}
\label{eq:e}
\frac{\dd  e^{(j)}}{\dd t}= -\frac{1}{J}\sum_{k=1}^J E_{jk}e^{(k)}=-\frac{1}{J}\sum_{k=1}^J E_{jk}r^{(k)}\,.
\end{equation}
For the matrix-valued quantity $E$, we obtain
\begin{eqnarray*}
   \frac{\dd  }{\dd t}E=-\frac2JE^2\,.
     \end{eqnarray*}
Thus, the dynamical behaviour of the quantities $e^{(j)}$ and $Ae^{(j)}$ is not influenced by the noise in the data. Therefore, the results presented in \cite{AndrewC2017} for the noise free case still hold: 
for the orthogonal matrix $X$ defined through
the eigendecomposition of $E(0)$ it follows that 
       \begin{eqnarray} \label{eq:diagE}
  E(t)=X\Lambda(t)X^{{\top}}\;
     \end{eqnarray}
     with
$\Lambda(t)=\mbox{diag}\{ \lambda^{(1)}(t),\ldots,\lambda^{(J)}(t)\}$,
$\Lambda(0)=\mbox{diag}\{ \lambda_0^{(1)},\ldots,\lambda_0^{(J)}\}$
and
            \begin{eqnarray} \label{eq:odeenslambda}
   \lambda^{(j)}( t)=\Big({\frac 2J t+\frac{1}{\lambda_0^{(j)}}}\Big)^{-1}\;,
     \end{eqnarray}
     if $ \lambda_0^{(j)}\neq 0$, otherwise $ \lambda^{(j)}(t)= 0$.
\cls{This proves that the matrix $E$, and hence all its elements, are globally
bounded in time.}
     
The misfit $\vartheta^{(j)}$ satisfies 
\begin{equation}
\frac{\dd  \vartheta^{(j)}}{\dd t}= -\frac{1}{J}\sum_{k=1}^J D_{jk}Ae^{(k)}
\end{equation}
and the dynamical behaviour of the corresponding matrix-valued quantity $D$ is given by
\begin{equation}
\frac{\dd }{\dd t}D= -\frac{2}{J}DE\;.
\end{equation}
The boundedness of $D(t)$ follows from the boundedness of the misfit $\vartheta^{(j)}$, which can be derived from
  \begin{equation}
\cls{\frac12\frac{\dd  \|\vartheta^{(j)}\|_\Gamma^2}{\dd t}= -\frac{1}{J}\sum_{k=1}^J D_{jk}D_{jk}}\,.
\end{equation}
Hence, the misfit $\vartheta^{(j)}$ is bounded uniformly in time. By the \cls{Cauchy-Schwarz} inequality, the bound on $D$ follows with
 \begin{eqnarray*}
 \cls{D_{ij}^2=\langle \vartheta^{(i)},Ae^{(j)}\rangle_{\Gamma}^2\le \|\vartheta^{(i)}\|_{\Gamma}^2\cdot \|Ae^{(j)}\|_{\Gamma}^2 \le C \|Ae^{(j)}\|_{\Gamma}^2\;}
 \end{eqnarray*}
 for a constant $C>0$ independent of $T$. \cls{This establishes that
 $D_{ij} \rightarrow 0$ at least as fast as {$\frac{1}{\sqrt{t}}$} as $t \rightarrow \infty$}, in particular, $D$ is uniformly bounded in time. \cls{Note that the convergence rate follows from the convergence rate $1$ of the quantity $ \|Ae^{(j)}\|_{\Gamma}^2$
established in \eqref{eq:odeenslambda}.}
Global existence for $u^{(j)}$ (and $e^{(j)}$, $r^{(j)}$)
follows.
\end{proof}
The proof of Theorem \ref{t:1pert} reveals that the behaviour of the quantity $e^{(j)}$, which is an indicator of the ensemble collapse, is not affected by the noise. Hence, \cite[Theorem 3]{AndrewC2017} can be directly generalised to the perturbed case.
\begin{corollary}\label{cor:enscoll}
Let $y^\dagger$ denote the perturbed image of a 
truth \cls{$u^\dagger \in \cX:
y^\dagger=Au^\dagger+\eta^\dagger$ for some $\eta^\dagger \in \mathbb R^K$.}
Furthermore, assume that an initial ensemble $u^{(j)}(0) \in \cls{\cX}$
for $j=1,\dots, J$ is given. Then, the matrix valued quantity $E(t)$ converges to $0$ for $t \to \infty $ with an algebraic rate of convergence: {$\|E(t)\|={\mathcal O}(Jt^{-1}).$} 
\end{corollary}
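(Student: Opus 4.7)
The plan is to read off the conclusion directly from the explicit solution formula for the eigenvalues of $E(t)$ already established in the proof of Theorem \ref{t:1pert}. The key observation is that the ODE for $E$, namely $\dot E = -\frac{2}{J}E^2$, decouples completely from the noise $\eta^\dagger$ (only the equations for $D$, $\vartheta^{(j)}$ and $r^{(j)}$ see the perturbation), so the noise free analysis of \cite{AndrewC2017} transfers verbatim to $E$ in the noisy case.

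First I would recall that $E(0)$ is the Gram matrix with entries $\langle Ae^{(l)}(0),Ae^{(j)}(0)\rangle_\Gamma$, hence symmetric positive semidefinite, so its eigenvalues satisfy $\lambda_0^{(j)}\ge 0$. Combined with the diagonalization \eqref{eq:diagE} and the explicit formula \eqref{eq:odeenslambda}, this yields for each $j$ with $\lambda_0^{(j)}>0$ the bound
\begin{equation*}
0\le \lambda^{(j)}(t)=\Big(\frac{2t}{J}+\frac{1}{\lambda_0^{(j)}}\Big)^{-1}\le \frac{J}{2t},
\end{equation*}
while for $\lambda_0^{(j)}=0$ one has $\lambda^{(j)}(t)\equiv 0$. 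Both cases give $\lambda^{(j)}(t)\le J/(2t)$ uniformly in the initial spectrum.

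Since $X$ in \eqref{eq:diagE} is orthogonal, the spectral (or Frobenius) norm of $E(t)$ equals the corresponding norm of $\Lambda(t)$, and I would then conclude
\begin{equation*}
\|E(t)\| = \max_{1\le j\le J}\lambda^{(j)}(t)\le \frac{J}{2t} = \mathcal{O}(Jt^{-1}),
\end{equation*}
which is the claim. There is really no serious obstacle here: the whole statement is an algebraic consequence of the closed-form eigenvalue decay \eqref{eq:odeenslambda}. The only point worth emphasizing is the noise independence of the $E$-equation, since this is what allows the corollary to be stated for the noisy setting with exactly the same algebraic rate as in the noise free case treated in \cite{AndrewC2017}.
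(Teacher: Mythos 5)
Your argument is correct and is essentially the paper's own route: the corollary is obtained by noting that the equation $\dot E=-\tfrac{2}{J}E^2$ is unaffected by the noise, so the explicit eigenvalue formula \eqref{eq:odeenslambda} from the proof of Theorem \ref{t:1pert} (equivalently, Theorem 3 of \cite{AndrewC2017}) applies verbatim and yields $\|E(t)\|\le J/(2t)$. Your only addition is to spell out the positive semidefiniteness of the Gram matrix $E(0)$ and the final norm bound, which the paper leaves implicit.
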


The ensemble collapse is a further form of regularisation as the solution
not only remains in the linear span of the initial ensemble, but actually
asymptotically lives in the span of a single element\cls{, provided that the forward response operator $A$ is one-to-one} .
The preceding result shows that the ensemble collapse, namely the fact
that all particles converge to their common mean, does not depend on the 
realisation of the noise. 
We now discuss the convergence properties of \cls{ensemble Kalman inversion} in the noisy case. The analysis presented in \cite[Theorem 4]{AndrewC2017} indicates that we can transfer the convergence result straightforwardly to the mismatch $\vartheta^{(j)}$. However, the convergence of the residuals $r^{(j)}$ depends on the realisation of the noise.

\begin{theorem}
\label{t:3pert}
Let $y^\dagger$ denote the \cls{noisy image of a 
truth $u^\dagger \in \cX:
y^\dagger=Au^\dagger+\eta^\dagger$ for some $\eta^\dagger \in \mathbb R^K$.}
\cls{Assume further that the forward operator $A$ is one-to-one}. 
Let {$\Yl$} denote the linear span of the $\{{A}e^{(j)}(0)\}_{j=1}^J$
and let {$\Yp$} denote the orthogonal complement of {$\Yl$}
in \cls{$\mathbb R^K$} and assume that the initial ensemble members are chosen
so that {$\Yl$} has the maximal dimension {$\min\{J-1,\dim(Y)\}.$}  
Then $\vartheta^{(j)}(t)$ may be decomposed uniquely
as {$\dll(t)+\dpp(t)$ with $\dll \in \Yl$ and $\dpp \in \Yp$}, where ${\dll}(t) \to 0$ as $t \to \infty$ and $\dpp(t)=\dpp(0)={\vartheta^{(1)}_{\perp}}.$

Furthermore, if $\langle \eta^\dagger, Ae^{(k)}\rangle \le \langle Ar^{(k)}, Ae^{(k)}\rangle$, the \cls{mapped residual} is monotonically decreasing. The rate of convergence of the component of the residual mapped forward to the observational space, which belongs to $\Yl$, can be arbitrarily slow, i.e. depending on the realisation of the noise, the rate of convergence can be arbitrarily close to $0$.

\end{theorem}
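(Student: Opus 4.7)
The plan has four parts, corresponding to the four claims of the theorem, and builds directly on the matrix ODEs and eigendecomposition already established in Theorem~\ref{t:1pert}.

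First, for the decomposition and the invariance of $\dpp(t)$, I would show that $Ae^{(k)}(t)\in\Yl$ for every $t\ge 0$. Equation~\eqref{eq:e} is a linear ODE whose right-hand side is a combination of the $e^{(l)}$'s, so $\spann\{Ae^{(l)}(t)\}$ is preserved in time; moreover the rank of $E(t)$ is constant by \eqref{eq:odeenslambda}, so this span must coincide with $\Yl$ throughout. Consequently the right-hand side of the misfit equation $\frac{\dd \vartheta^{(j)}}{\dd t}=-\frac{1}{J}\sum_{k}D_{jk}Ae^{(k)}$ lies in $\Yl$, hence $\dpp(t)=\dpp(0)$. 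The identification $\dpp(0)=\vartheta^{(1)}_{\perp}$ then follows from $\vartheta^{(j)}(0)-\vartheta^{(1)}(0)=A(e^{(j)}(0)-e^{(1)}(0))\in\Yl$.

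Second, for $\dll(t)\to 0$, I would integrate the matrix ODE $\frac{\dd D}{\dd t}=-\frac{2}{J}DE$ using the diagonalisation \eqref{eq:diagE}. Setting $\widetilde D=DX$ decouples the system into $\frac{\dd \widetilde D_{jk}}{\dd t}=-\frac{2}{J}\lambda^{(k)}(t)\widetilde D_{jk}$, and evaluating $\int_0^t\lambda^{(k)}(s)\,\dd s$ yields $\widetilde D_{jk}(t)=\widetilde D_{jk}(0)\,\lambda^{(k)}(t)/\lambda_0^{(k)}$ whenever $\lambda_0^{(k)}>0$. A key observation is that columns of $X$ in the kernel of $E(0)$ represent $\Gamma$-null combinations of the $\{Ae^{(l)}(0)\}$, so the corresponding entries $\widetilde D_{jk}(0)$ automatically vanish. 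Expressing $\dll$ via the Moore--Penrose inverse of $E$ and switching to the $X$-basis produces
\[
\|\dll(t)\|_\Gamma^2=\sum_{k:\,\lambda_0^{(k)}>0}\frac{\widetilde D_{jk}(0)^2}{(\lambda_0^{(k)})^2}\,\lambda^{(k)}(t),
\]
so convergence at algebraic rate $\mathcal O(1/t)$ drops out of \eqref{eq:odeenslambda}.

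Third, for the monotonicity statement I would differentiate $\|Ar^{(j)}\|_\Gamma^2$, using $\vartheta^{(j)}=Ar^{(j)}-\eta^\dagger$ and $\frac{\dd }{\dd t}Ar^{(j)}=\frac{\dd \vartheta^{(j)}}{\dd t}$, to obtain
\[
\tfrac12\frac{\dd }{\dd t}\|Ar^{(j)}\|_\Gamma^2=-\frac{1}{J}\sum_{k=1}^J D_{jk}F_{jk}=-\frac{1}{J}\sum_{k=1}^J\bigl(F_{jk}-\langle\eta^\dagger,Ae^{(k)}\rangle_\Gamma\bigr)F_{jk}.
\]
The hypothesis on $\eta^\dagger$ versus the mapped residual should force non-positivity of the right-hand side, possibly after pairing terms or passing to the aggregate $\sum_j\|Ar^{(j)}\|_\Gamma^2$ in order to exploit the symmetry of $F$.

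Fourth, for the arbitrarily-slow-rate claim I would exploit the explicit formula from step two: the coefficient of $\lambda^{(k)}(t)$ in the bound on $\|\dll\|_\Gamma^2$ contains the factor $1/(\lambda_0^{(k)})^2$, and $\widetilde D_{jk}(0)$ depends linearly on the noise through $D_{jl}(0)=F_{jl}(0)-\langle\eta^\dagger,Ae^{(l)}(0)\rangle_\Gamma$; choosing $\eta^\dagger$ so as to excite a direction whose eigenvalue $\lambda_0^{(k)}$ is small makes the prefactor arbitrarily large, while the $\mathcal O(1/t)$ asymptotic only becomes operative beyond $t\sim J/(2\lambda_0^{(k)})$, so on any prescribed finite time window the parallel component of the misfit can be kept arbitrarily close to its initial value. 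The main obstacle I expect is in step three: the stated hypothesis controls only the diagonal $F_{kk}$, whereas the time-derivative involves the full double sum, so converting the inequality into particle-wise monotonicity will likely require either a symmetrisation over $j$ or a subtler combinatorial argument that I have not yet fully pinned down.
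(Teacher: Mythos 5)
Your proposal follows the paper's overall route but is considerably more self-contained. The paper disposes of the first part in one sentence by citing the noise-free reference, whereas you reconstruct it: preservation of $\spann\{Ae^{(k)}(t)\}$ forces $\dpp$ to be constant, and the explicit integration $\widetilde D_{jk}(t)=\widetilde D_{jk}(0)\,\lambda^{(k)}(t)/\lambda^{(k)}_0$ of $\dot D=-\tfrac{2}{J}DE$ together with $\|\dll\|_\Gamma^2=\sum_k\widetilde D_{jk}(t)^2/\lambda^{(k)}(t)$ gives $\dll\to 0$ with an explicit rate. That computation is correct and sharper than what the paper records. Your identity $\tfrac12\tfrac{\dd}{\dd t}\|Ar^{(j)}\|_\Gamma^2=-\tfrac1J\sum_k D_{jk}F_{jk}$ is also the one the chain rule actually produces; the corresponding display in the paper carries $\langle Ar^{(k)},Ae^{(k)}\rangle_\Gamma$ in the cross term where $F_{jk}=\langle Ar^{(j)},Ae^{(k)}\rangle_\Gamma$ should appear, so treat the paper's version as an index slip. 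For the arbitrarily-slow-rate claim you argue differently from the paper: you quantify the onset time $t\sim J/(2\lambda_0^{(k)})$ of decay in noise-excited directions with small $\lambda_0^{(k)}$, while the paper expands $Ar^{(j)}$ and $\eta^\dagger$ in the moving basis $\{Ae^{(k)}(t)\}$ and notes that the decay term and the noise term cancel when the noise coefficients match the residual coefficients, i.e.\ when $Ar^{(j)}_{\|}$ is already at its limit $\eta^\dagger_{\|}$. Both arguments are heuristic at the same level; yours buys an explicit constant, the paper's directly exhibits the stationary configuration.

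The one place you stop short --- deducing monotonicity from $\langle\eta^\dagger,Ae^{(k)}\rangle\le\langle Ar^{(k)},Ae^{(k)}\rangle$ --- is a real gap, but it is equally a gap in the published proof: the paper simply asserts that the hypothesis makes the right-hand side non-positive, and the term-by-term comparison it implicitly performs additionally requires the relevant inner products to be non-negative, exactly the sign issue you identify. Note also that your suggested repair of summing over $j$ would change the claim from particle-wise to aggregate monotonicity, so if you pursue it you must say so; the alternative is to add the sign condition to the hypothesis. Since neither repair appears in the paper, your proposal is not weaker than the published argument on this point --- it is merely explicit about where the argument is incomplete.
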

\begin{proof}
The first part of the theorem follows with the same arguments as used for the proof  of \cite[Theorem 4]{AndrewC2017}. \cls{For the second part we observe that the norm of 
the \cls{mapped} residuals satisfies the following differential equation:}
\begin{equation}
\frac12 \frac{\dd }{\dd t}\|Ar^{(j)}\|_\Gamma^2=-\frac1J \sum_{k=1}^J F_{jk}^2+\frac1J\sum_{k=1}^J \langle Ar^{(k)},Ae^{(k)}\rangle_\Gamma\langle \eta^\dagger,Ae^{(k)}\rangle_\Gamma\;.
\end{equation}
Provided that $\langle \eta^\dagger, Ae^{(k)}\rangle \le \langle Ar^{(k)}, Ae^{(k)}\rangle_\Gamma$ for $k=1,\ldots,J$, i.e. $\|\eta^\dagger\|_\Gamma \cos(\theta_1)\le \cls{\|Ar^{(k)}\|_\Gamma} \cos(\theta_2)$ with $\theta_1$ and $\theta_2$ denoting the angle between $\eta^\dagger$ and $Ae^{(k)}$, and between $Ar^{(k)}$ and $Ae^{(k)}$, respectively, the residuals mapped to the image space of the forward operator are monotonically decreasing.
Expanding the quantities $Ar^{(k)}$ and $\eta^\dagger$ in $\Yl$ and the orthogonal complement $\Yp$

\begin{eqnarray*}
{A}r^{(j)}(t)=\sum_{k=1}^J\alpha_k {A}e^{(k)}(t)+{Ar^{(1)}_{\perp}}\\
\eta^\dagger=\sum_{k=1}^J\eta_k {A}e^{(k)}(t)+{A\eta^{(1)}_{\perp}}\;,
\end{eqnarray*}
cp. \cite[Lemma 8]{AndrewC2017} yields
\begin{eqnarray*}
\frac12 \frac{\dd }{\dd t}\|Ar^{(j)}\|_{\Gamma}^2&=&-\frac1J \sum_{k=1}^J\sum_{l=1}^J E_{lk}\alpha_k E_{kl}\alpha_l+ \frac1J \sum_{k=1}^J\sum_{l=1}^J E_{lk}\alpha_k E_{kl}\eta_l\;. 
\end{eqnarray*}
If the coefficients of the noise are of the size of $\alpha_k$, the right hand side becomes zero and the claim follows.
\end{proof}


{

\subsection{Stopping Criteria for \cls{Ensemble Kalman Inversion}}
\label{ssec:3.2}

The Bayesian \cls{derivation of the ensemble Kalman inversion algorithm
given in \cite{AndrewC2017} suggests an integration of the limiting equation
\eqref{eq:lode} up to time $T=1$.} This can be interpreted as an a priori regularisation strategy motivated by the probabilistic viewpoint. However, this stopping rule does not take into account the actual realisation of the noise nor the additional regularisation effect due 
to the ensemble collapse. \cls{Indeed ensemble collapse is caused by removing 
random noisy perturbations within the algorithm, causing an underestimation of 
the variance for linear Gaussian problems, suggesting that stopping at time $T=1$
may no longer be the right choice as the Bayesian connection can no longer be
justified. Our numerical experiments will indeed show that the
Bayesian stopping strategy often leads to a stopping criterion for the
unperturbed algorithm which is too early.}

\cls{The papers \cite{iglesias2014iterative,iglesias2015regularizing} suggest
an approach to regularising discrete-time ensemble Kalman inversion
methods, based on an analogy with deterministic iterative methods such as
Levenberg-Marquardt. Unfortunately this methodology does not transfer directly
to our continuous time setting as it corresponds to an adaptive time step, rather
than the fixed time-step $h$ used in the derivation above.
The proof of Theorem \ref{t:3pert} suggests an a posteriori stopping criterion 
for the method. In the deterministic setting, Morozov's discrepancy principle is a widely used and well understood stopping rule, see \cite{engl1996regularization} and
the references therein. The idea of this stopping rule is that, due to noisy data, the information in the observations cannot be distinguished from the noise for a \cls{mapped }residual which is on the order of the noise level $\delta$. This suggests that
asking for a \cls{mapped }residual with discrepancy smaller than $\delta$ may
lead to fitting of the unknown parameters to the noise.  
 We will numerically investigate the discrepancy principle as a suitable 
criterion in the presented setting. Furthermore, we note that if the noise is orthogonal to the space 
spanned by the linear ensemble, then Theorem \ref{t:3pert} 
shows the convergence of the \cls{mapped }residuals in the image space.

Motivated by the deterministic regularisation methods, the discrepancy principle is generalised to statistical noise; see \cite{Kaipio} for example. 
The iterations of the iterative ensemble method will be stopped when
\begin{equation}\label{eq:discrep}
\|A\bar u(t)-y^\dagger\|_2\le\tau\sqrt{{\rm trace}{(\Gamma)}}
\end{equation}
where $\tau>1$ is a given parameter and $\bar u(t)$ denotes the empirical mean of the ensemble at artificial time $t$. Here, the average noise level $\mathbb E(\|\eta\|_2^2)={\rm trace}{(\Gamma)}$ is taken into account.
(Since the noise in the observations is assumed to be normally distributed realisations of the noise cannot be bounded from above and below.)

The discrepancy principle for statistical noise \eqref{eq:discrep} does not generalise to the infinite or high-dimensional setting, as the residual is no longer a well-defined quantity. In \cite{Blanchard}, symmetrisation is suggested to overcome this problem
leading to  the stopping criterion
\[
\|A^*\Gamma^{-1}A\bar u(t)-A^*\Gamma^{-1/2}y^\dagger\|\le\tau\sqrt{{\rm trace}{(A^*\Gamma^{-1}A)}}.
\]
In order to obtain optimal rates, the authors in \cite{Blanchard} suggest modifying
this discrepancy principle to
\[
\|(\lambda I+A^*\Gamma^{-1}A)^{-1/2}(A\bar u(t)-y^\dagger)\|\le\tau\sqrt{{\rm trace}{((\lambda I+A^*\Gamma^{-1}A)^{-1}A^*\Gamma^{-1}A)}}\,, 
\]
where $\lambda>0$ is a given fixed parameter. The analysis presented in \cite{Blanchard} proving optimality of the strategy is not directly applicable to the ensemble
Kalman inversion methodology that we study here, due to the nonlinear nature of the
ensemble algorithms. However, we will observe in the numerical experiments that the modified version of the discrepancy principle leads to satisfactory results. 

We also remark that stopping strategies taking into account the Bayesian viewpoint on the inverse problem lead to appealing alternatives. Assuming a Gaussian prior distribution for example, the parametrised variance can be modelled as a hyperparameter, which can then be estimated from the data. Due to the regularisation effect of the ensemble, this can be viewed as an alternative stopping / regularisation strategy. Closely related is the idea of variance inflation, which can be interpreted in a similar way. The work presented here, however, is restricted to the deterministic setting, not taking into account the Bayesian viewpoint. The analysis of the stopping rules requires therefore a different setting, which is beyond the scope of the paper.
 

}

\section{Numerical Experiments}\label{sec:N}
The forward model is described by the one dimensional elliptic equation
 \begin{equation}\label{eq:forward}
    -\frac{\rd^2 p}{\rd x^2}+p=u \quad \mbox{in } D:=(0,\pi)\, , \ p=0  \quad \mbox{in } \partial D\; .
\end{equation}
The solution operator of the model is a mapping \cls{$G:L^2(D)\to H^2(D) \cap H^1_0(D)$} taking
$u$ into $p.$ The solution is observed at $K=2^{4}-1$ equispaced observation points at $x_k=\frac{k}{2^{4}}, k=1,\ldots,2^{4}-1$, which defines the observation operator \cls{$\cO:H^2(D) \cap H^1_0 \to \mathbb R^K$}, i.e. the operator $A$ is a mapping from \cls{$L^2(D)$} to $\mathbb R^K$ defined by the composition of the solution operator and the 
(pointwise) observation operator.
We use a finite element method with continuous, piecewise linear ansatz functions on a uniform mesh with meshwidth $h=2^{-8}$ to solve the forward problem (the spatial discretisation leads to a discretisation of $u$, i.e. $u \in \mathbb R^{2^8-1}$).

Then, the inverse problem consists of recovering the unknown data $u$ from noisy observations
\begin{eqnarray}\label{eq:modelne}
y^\dagger&=&\cls{\cO(p)}+\eta^\dagger =Au^\dagger+\eta^\dagger\;.
\end{eqnarray}
The measurement noise is chosen to be normally distributed, $\eta \sim \mathcal N(0,\gamma I )$, $\gamma=0.01^2 \in \mathbb R, \ I \in \mathbb R^{K \times K}$. Furthermore, the prior is $\mu_0=N(0,C_0)$ with covariance operator $C_0=10(-\Delta)^{-1}$. Here, we consider the Laplacian $\Delta$ with domain $H^2(D) \cap H^1_0(D)$. The initial ensemble is based on the eigendecomposition of the covariance operator $C_0$, i.e. $u^{(j)}(0)=\sqrt{\lambda_j}\zeta_j z_j$ with $\zeta_j \sim \mathcal N(0,1)$ for $j=1,\ldots,J$ and $\{\lambda_j,z_j\}_{j \in \mathbb N}$ denoting \cls{(the explicitly known)} eigenvalues and eigenfunctions 
of $C_0$. 

To illustrate and numerically verify the results presented in this paper, we investigate the dynamical behaviour of the quantities $e,r$ and the misfit $\vartheta$. 
The theoretical results presented hold true \cls{for any ensemble size}; we consider in the following a rather small ensemble of size $J=5$. For the sake of presentation, the empirical mean (and minimum and maximum deviations) of the ensemble is shown.
\begin{center}
\begin{minipage}{0.6\textwidth}
 \begin{figure}[H]
\centering
    \includegraphics[width=1.0\textwidth]{./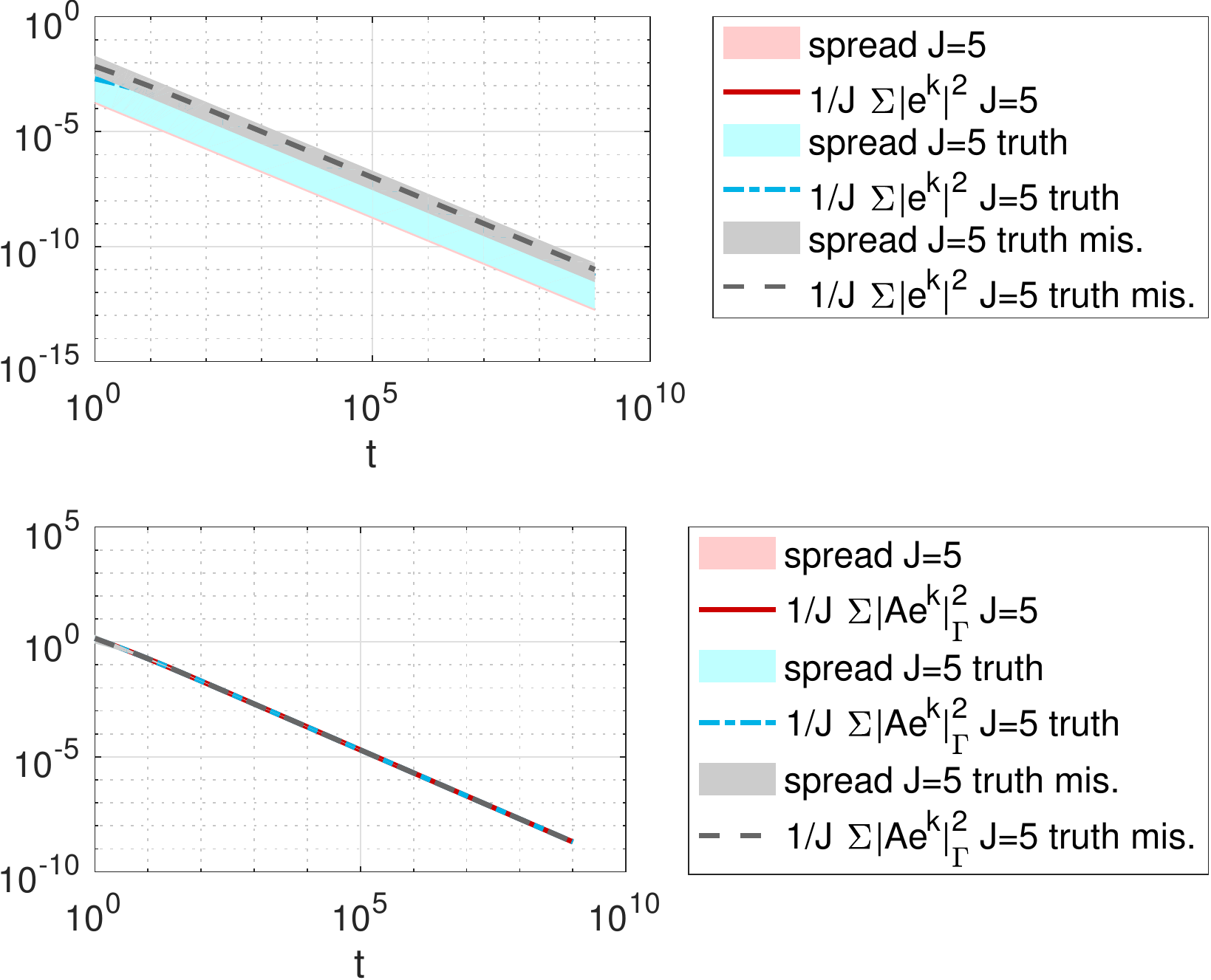}
~\\[-0.75cm]\caption{\footnotesize \label{fig:linNFe}
Quantities $|e|_2^2$, $|Ae|_{\Gamma}^2$ w.r. to time $t$, $J=5$ (KL red), ($u^\dagger$ adaptive blue), ($\tilde u$ adaptive gray),  $K=2^4-1$, initial ensemble chosen based on KL expansion of ${C_0}=10(-\Delta)^{-1}$, $\eta\sim{\mathcal N}(0,0.01^2 \id)$.}
 \end{figure}
\end{minipage}
\end{center}
To investigate the convergence results further, we compare the performance of three ensembles (all of size $J=5$): the first one (shown in red) is based on the first five terms in the Karhunen-Lo\`eve (KL) expansion of the covariance operator $C_0$, the second one (shown in blue) is chosen such that the contribution of $A\rp(t)$ in Theorem \ref{t:3pert} is minimised (i.e. $Ar^{(1)}=\sum_{k=1}^J \alpha_k Ae^{(k)}$ for some coefficients $\alpha_k\in\mathbb R$. Given $u^{(2)},\ldots,u^{(J)}$ and coefficients $\alpha_1,\ldots,\alpha_J$, we define $u^{(1)}=(1-\alpha_1+\sum_{k=1}^J \alpha_k/J)^{-1}(u^\dagger-\alpha_1/J\sum_{j=2}^J\ u^{(j)}+\sum_{k=2}^J\alpha_k u^{(k)}-\alpha_k/J \sum_{j=2}^J u^{(j)})$), the third ensemble (shown in grey) is chosen such that the contribution of $\vartheta(t)_{\perp}$ in Theorem \ref{t:3pert} is minimised (i.e. $\vartheta^{(1)}=\sum_{k=1}^J \alpha_k Ae^{(k)}$ for some coefficients $\alpha_k\in\mathbb R$. Given $u^{(2)},\ldots,u^{(J)}$ and coefficients $\alpha_1,\ldots,\alpha_J$, we define $u^{(1)}=(1-\alpha_1+\sum_{k=1}^J \alpha_k/J)^{-1}(\tilde u-\alpha_1/J\sum_{j=2}^J\ u^{(j)}+\sum_{k=2}^J\alpha_k u^{(k)}-\alpha_k/J \sum_{j=2}^J u^{(j)})$, where $\tilde u$ is the minimiser of the underdetermined least-squares problem).

In practice, the second strategy is not implementable, since the truth is used to construct the ensemble. However, the performance of the second strategy gives useful insight into the convergence behaviour of \cls{ensemble Kalman inversion.}  

The ensemble collapse is not affected by the choice of the initial ensemble. We observe the predicted algebraic rate of convergence to the empirical mean, cp Figure \ref{fig:linNFe}.

~\\

The convergence behaviour of the \cls{mapped }residuals and the misfit, both projected to the subspace spanned by the initial ensemble and the complement are shown in the \cls{
Figures \ref{fig:linNDmis} and \ref{fig:linNDres}. }
~\\
 \begin{minipage}{0.45\textwidth}
 \begin{figure}[H]
\centering
    \includegraphics[width=\textwidth]{./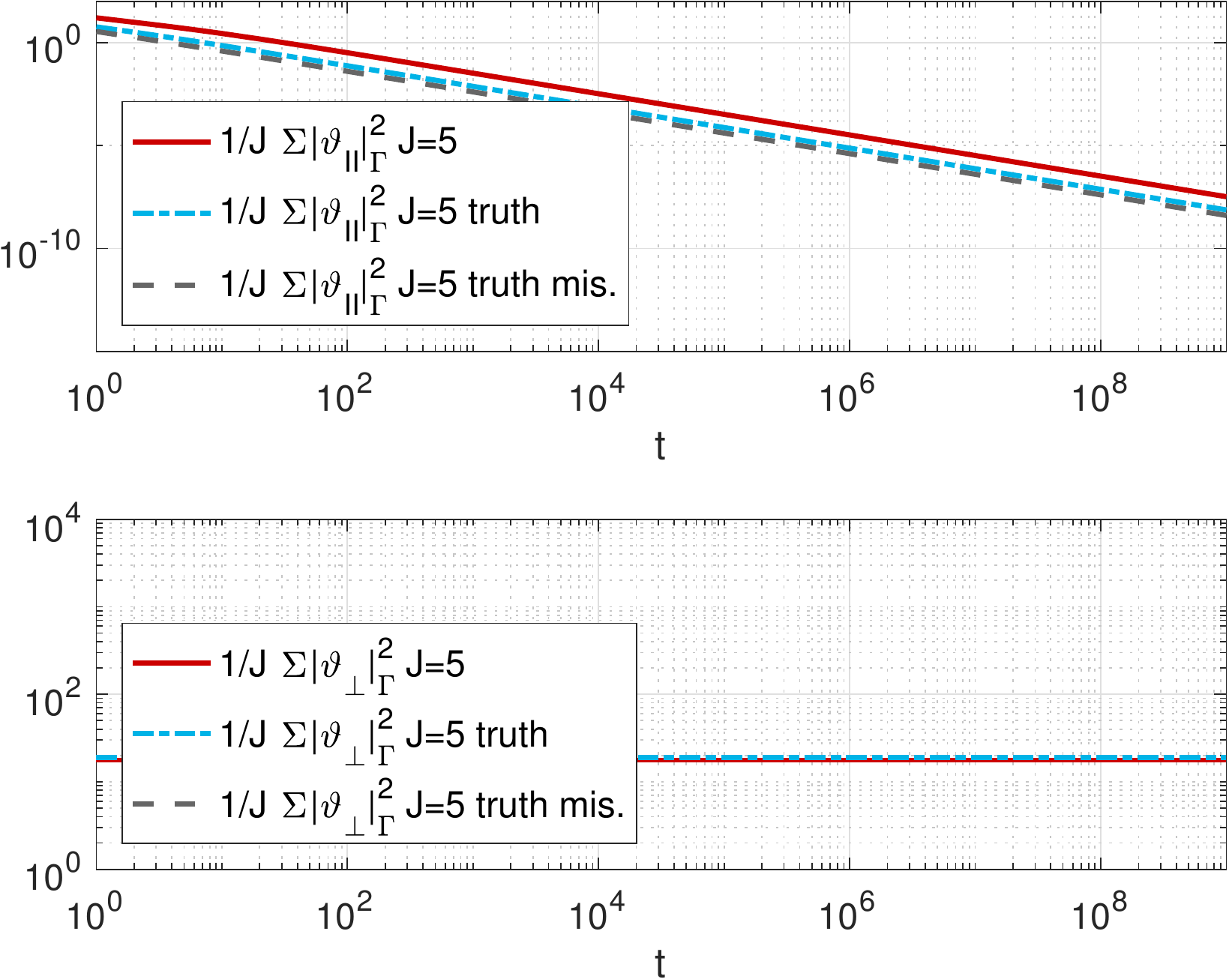}
~\\[-0.75cm]\caption{\footnotesize\label{fig:linNDmis}
Misfit $|\vartheta_{II}|_\Gamma^2$ and $|\vartheta_{\perp}|_\Gamma^2$ w.r. to time $t$, $J=5$ (KL red), ($u^\dagger$ adaptive blue), ($\tilde u$ adaptive grey),  $K=2^4-1$, initial ensemble chosen based on KL expansion of ${C_0}=10(-\Delta)^{-1}$, $\eta\sim{\mathcal N}(0,0.01^2 \id)$.}
 \end{figure}
\end{minipage}
~
\begin{minipage}{0.45\textwidth}
 \begin{figure}[H]
\centering
    \includegraphics[width=\textwidth]{./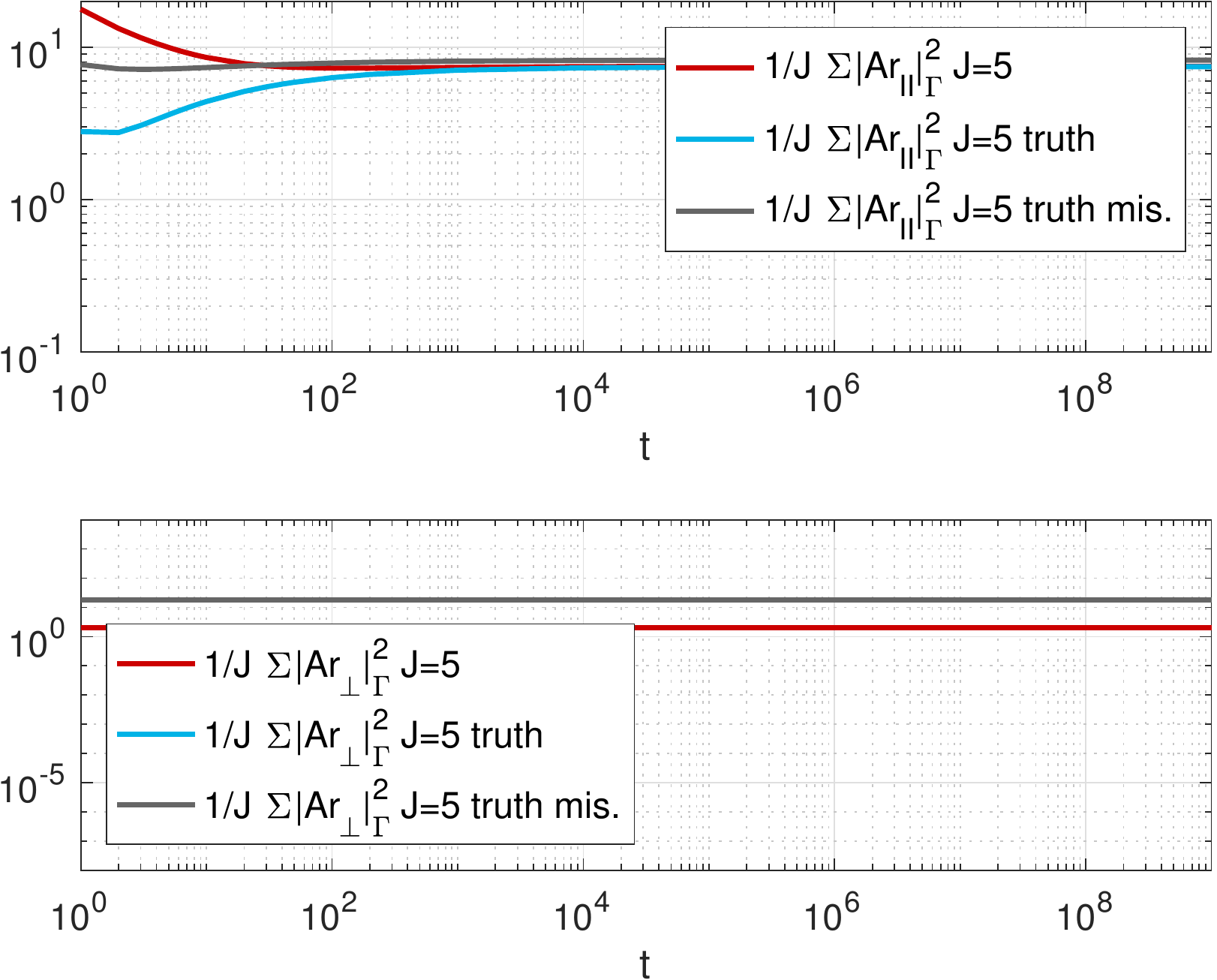}
~\\[-0.75cm]\caption{\footnotesize\label{fig:linNDres}
Mapped residuals $|Ar_{II}|_\Gamma^2$ and $|Ar_{\perp}|_\Gamma^2$ w.r. to time $t$, $J=5$ (KL red), ($u^\dagger$ adaptive blue), ($\tilde u$ adaptive grey),  $K=2^4-1$, initial ensemble chosen based on KL expansion of ${C_0}=10(-\Delta)^{-1}$, $\eta\sim{\mathcal N}(0,0.01^2 \id)$.}
 \end{figure}
\end{minipage}

~\\

The algebraic rate of the misfit is clearly confirmed. Furthermore, the convergence behaviour of the \cls{mapped }residuals for the KL based ensemble (shown in red in \cls{Figure \ref{fig:linNDres}}) illustrates the arbitrarily slow convergence predicted by the theory, i.e. we observe a convergence rate deteriorating to $0$. For the other two ensembles, we even observe an increase in the \cls{mapped} residual, since the angle conditions are not satisfied.
The comparison of the resulting estimates with the truth reveals the strong overfitting effect of the third ensemble, cp Figure \ref{fig:linNDsol}. This behaviour is expected due to the construction of the ensemble, which implies an amplification of the noise in the data.
\begin{center}
 \begin{minipage}{0.6\textwidth}
 \begin{figure}[H]
\centering
~\\[0.2cm]
    \includegraphics[width=\textwidth]{./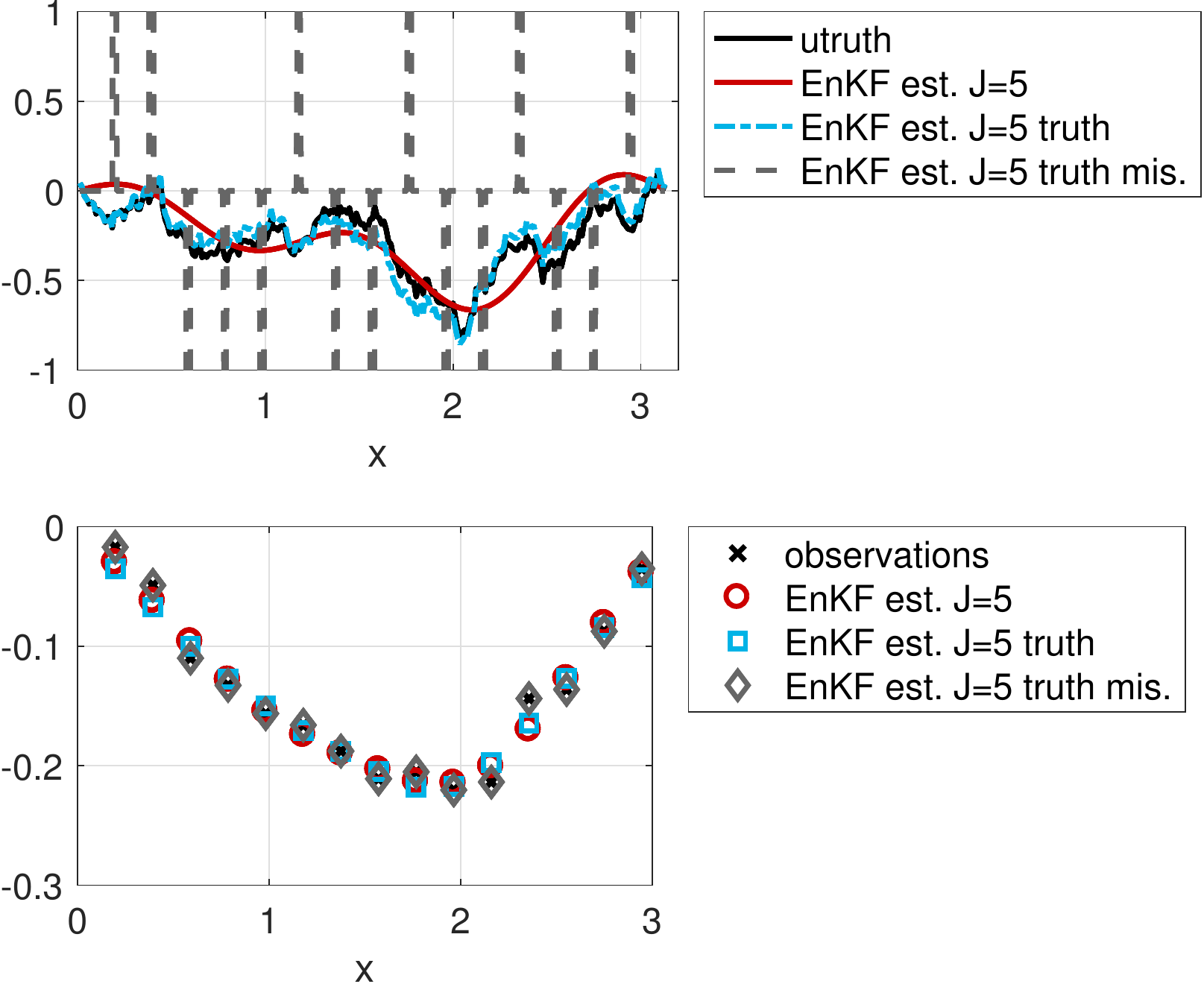}
    ~\\[-0.75cm]\caption{\footnotesize \label{fig:linNDsol}
Comparison of the \cls{ensemble Kalman inversion} estimate with the truth and the observations, $J=5$ (KL red), ($u^\dagger$ adaptive blue), ($\tilde u$ adaptive grey),  $K=2^4-1$, initial ensemble chosen based on KL expansion of ${C_0}=10(-\Delta)^{-1}$, $\eta\sim{\mathcal N}(0,0.01^2 \id)$.}
 \end{figure}
 \end{minipage}
 \end{center}
 
 ~\\
 
 To illustrate the effect of the angle condition and the resulting degradation of the convergence order of the \cls{mapped }residuals, we repeat the experiments with noise in the data, which is orthogonal to the subspace spanned by the initial ensemble. The theoretical results suggest an algebraic rate of convergence, which can be confirmed by the results presented in Figure \ref{fig:linNDmisT}.
 \begin{center}
  \begin{minipage}{0.6\textwidth}
 \begin{figure}[H]
\centering
    \includegraphics[width=\textwidth]{./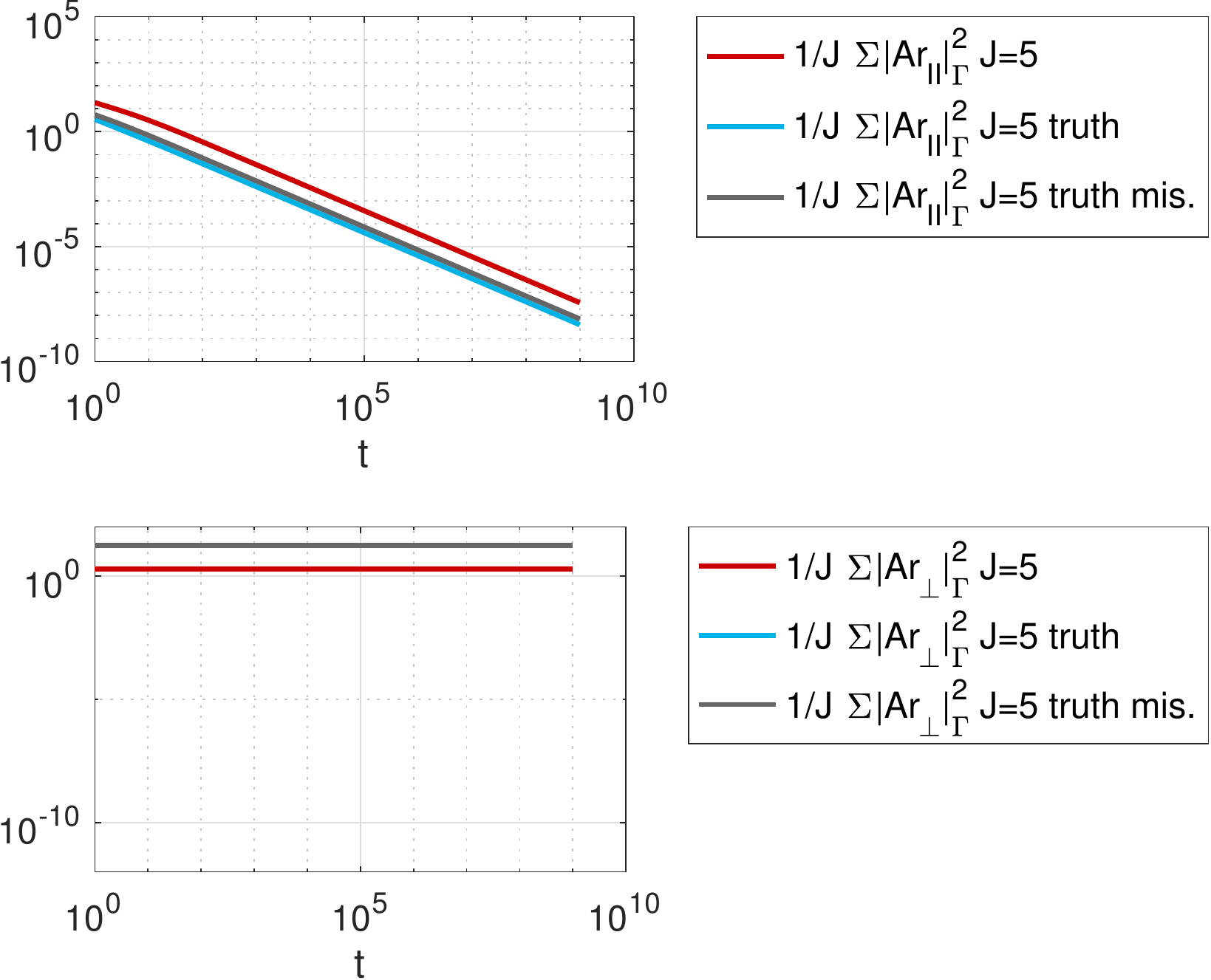}
~\\[-0.75cm]\caption{\footnotesize\label{fig:linNDmisT}
Mapped residuals $|Ar_{II}|_\Gamma^2$ and $|Ar_{\perp}|_\Gamma^2$ w.r. to time $t$, $J=5$ (KL red), ($u^\dagger$ adaptive blue), ($\tilde u$ adaptive grey),  $K=2^4-1$, initial ensemble chosen based on KL expansion of ${C_0}=10(-\Delta)^{-1}$, $\eta\sim{\mathcal N}(0,0.01^2 \id)$, observational noise orthogonal to the subspace spanned by the initial ensembles.}

 \end{figure}
\end{minipage}
\end{center}

~\\

The result on the ensemble collapse Corollary \ref{cor:enscoll} indicates that the regularisation effect of the method strongly depends on the number of particles in the ensemble. The Bayesian stopping rule, which can be interpreted as an a priori stopping rule, does not reflect this behaviour. 
The results of the Bayesian stopping rule are summarised in Figures \ref{fig:disc5solBayes}-\ref{fig:disc50Bayes}.

 \begin{minipage}{0.45\textwidth}
 \begin{figure}[H]
\centering
     \includegraphics[width=\textwidth]{./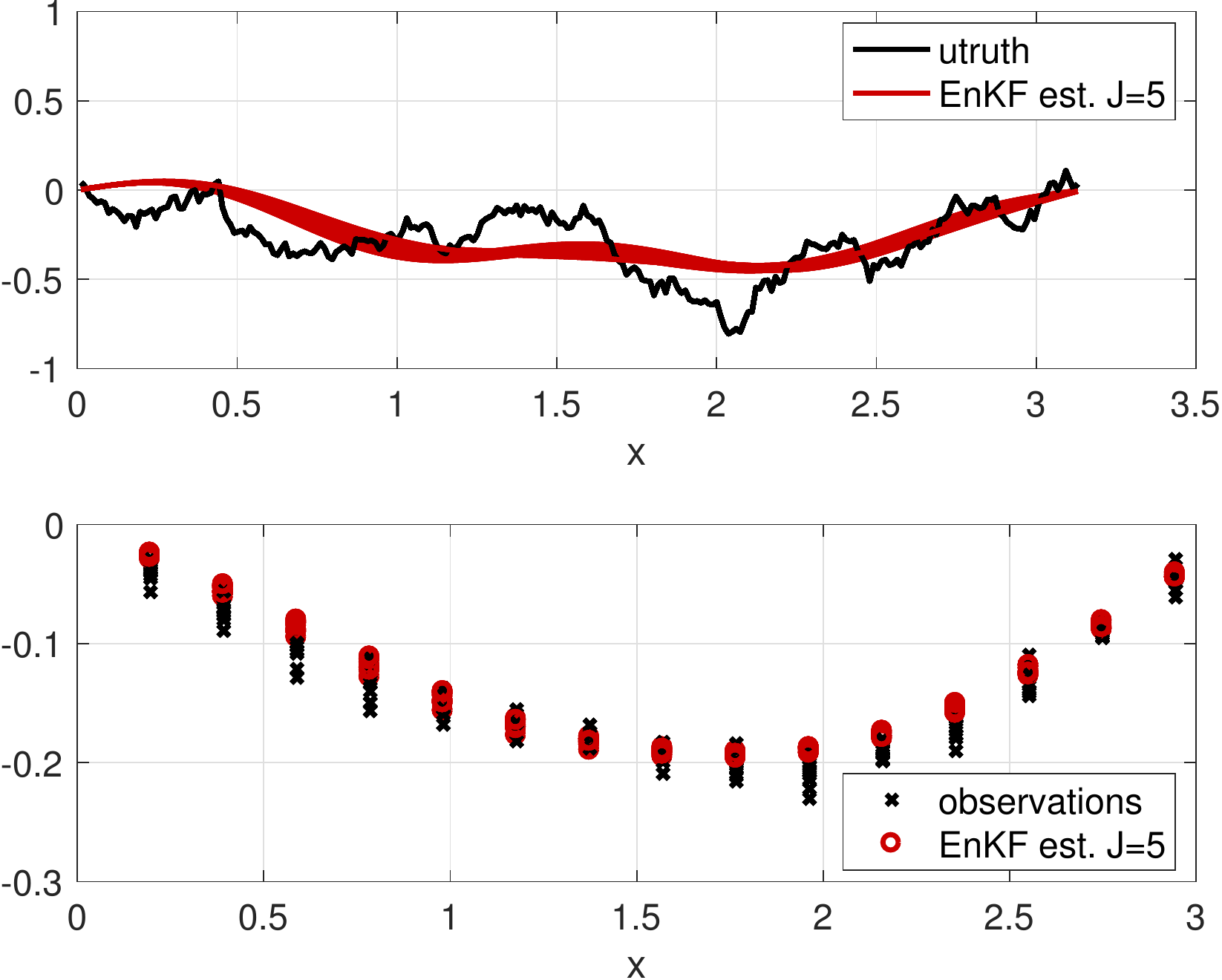}
    ~\\[-0.75cm]\caption{\footnotesize \label{fig:disc5solBayes}
Comparison of the \cls{ensemble Kalman inversion} estimate with the truth and the observations with Bayesian stopping rule, $J=5$ based on KL expansion  of ${C_0}=10(-\Delta)^{-1}$ (red), $K=2^4-1$, $10$ randomly initialised ensembles, $10$ randomly perturbed observation.}
 \end{figure}
 \end{minipage}
~
\begin{minipage}{0.45\textwidth}
 \begin{figure}[H]
\centering
     \includegraphics[width=\textwidth]{./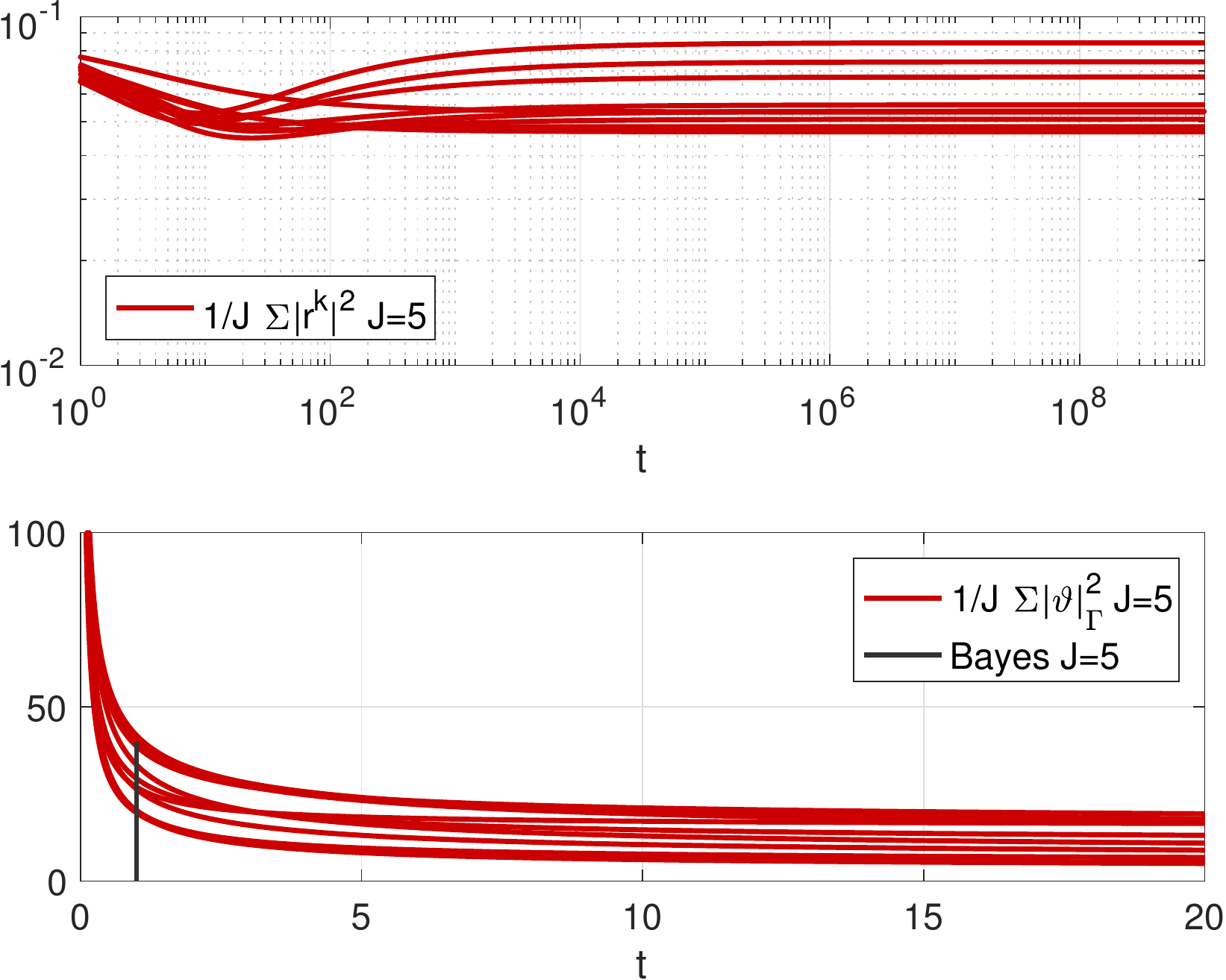}
~\\[-0.75cm]\caption{\footnotesize\label{fig:disc5Bayes}
Mapped residuals $|A\bar r|_\Gamma^2$ (above) and $|\bar \vartheta|_\Gamma^2$ with Bayesian stopping rule (below) w.r. to time $t$, $J=5$ based on KL expansion  of ${C_0}=10(-\Delta)^{-1}$ (red),  $K=2^4-1$, $10$ randomly initialised ensembles, $10$ randomly perturbed observation.}
 \end{figure}
\end{minipage}

 \begin{minipage}{0.45\textwidth}
 \begin{figure}[H]
\centering
     \includegraphics[width=\textwidth]{./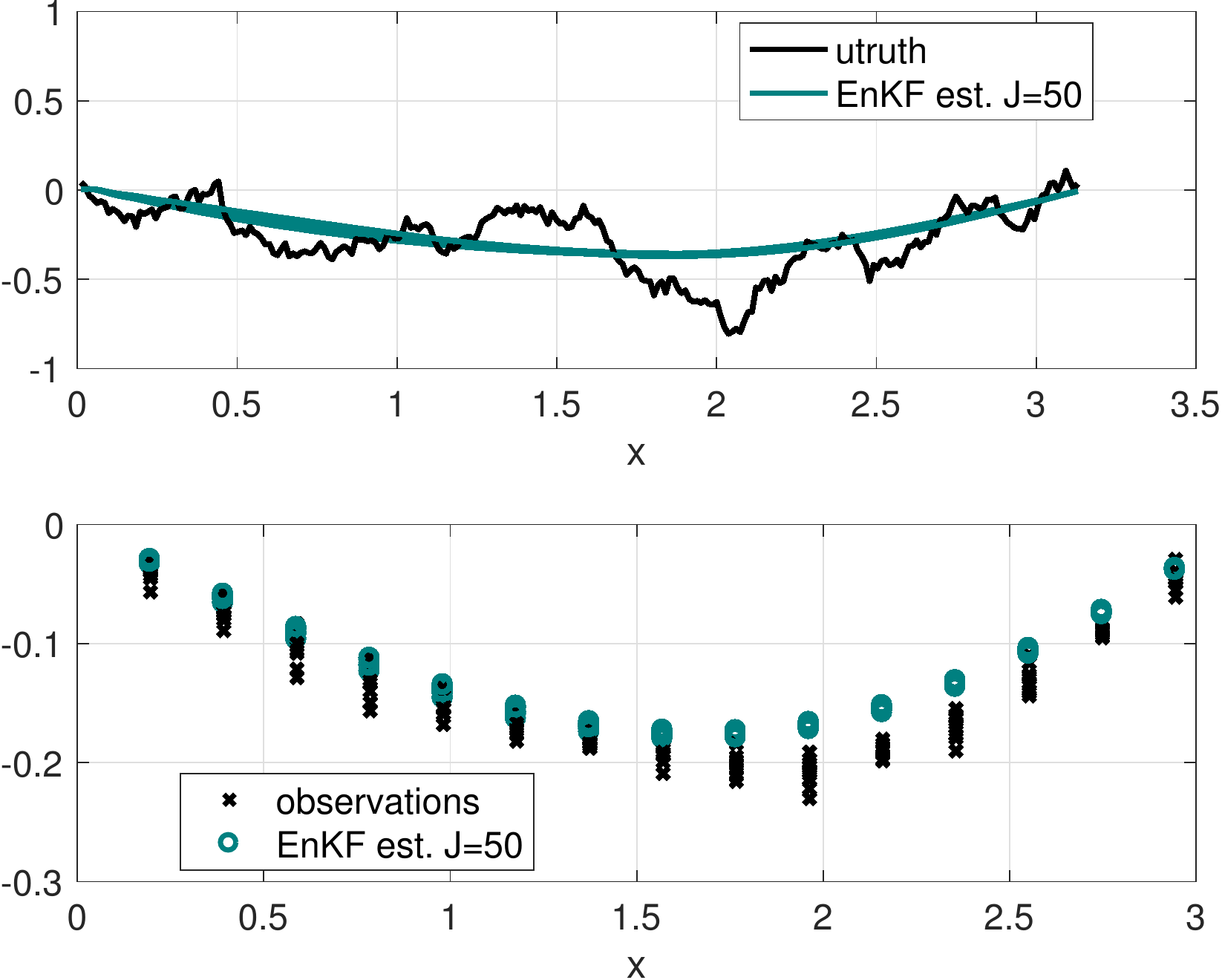}
    ~\\[-0.75cm]\caption{\footnotesize \label{fig:disc50solBayes}
Comparison of the \cls{ensemble Kalman inversion} estimate (with stopping rule) with the truth and the observations with Bayesian stopping rule, $J=50$ based on KL expansion  of ${C_0}=10(-\Delta)^{-1}$ (blue), $K=2^4-1$, $10$ randomly initialised ensembles, $10$ randomly perturbed observation.}
 \end{figure}
 \end{minipage}
~
\begin{minipage}{0.45\textwidth}
 \begin{figure}[H]
\centering
     \includegraphics[width=\textwidth]{./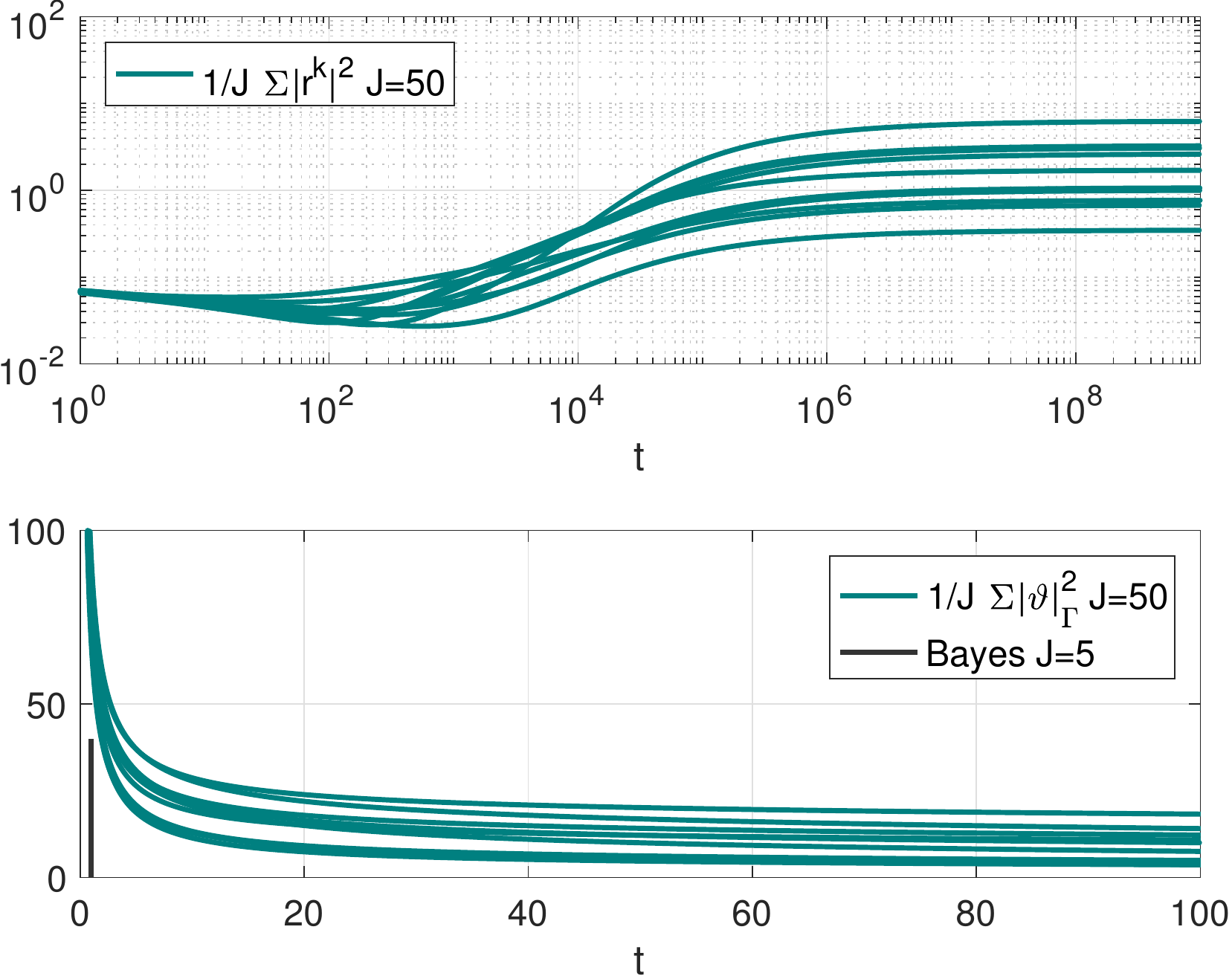}
~\\[-0.75cm]\caption{\footnotesize\label{fig:disc50Bayes}
Mapped residuals $|A\bar r|_\Gamma^2$ (above) and $|\bar \vartheta|_\Gamma^2$ with Bayesian stopping rule (below) w.r. to time $t$, $J=50$ based on KL expansion  of ${C_0}=10(-\Delta)^{-1}$ (blue),  $K=2^4-1$, $10$ randomly initialised ensembles, $10$ randomly perturbed observation.}
 \end{figure}
\end{minipage}

~\\
We will show in the following that the discrepancy principle leads to suitable stopping strategy, in particular, it has the potential to substantially improve the accuracy of the \cls{ensemble Kalman inversion} estimate. To do so, we repeat the experiments with $10$ randomly chosen ensembles (based on the KL expansion of the prior covariance operator) of size $J=5$ and $J=50$. The noise in the data is randomly chosen from $ \mathcal N(0,\gamma I )$ with $\gamma=0.01^2\in\cls{\mathbb R}$. Motivated by the previous discussion on the discrepancy principle, we implement a stopping rule of the form $
\|A\bar u(t)-y^\dagger\|_2 \le 1.2 \sqrt{K}\gamma$, where $K$ denotes the number of observations. Figures \ref{fig:disc5sol} - \ref{fig:disc50} show the comparison of the estimates based on the discrepancy principle with the truth.

 \begin{minipage}{0.45\textwidth}
 \begin{figure}[H]
\centering
     \includegraphics[width=\textwidth]{./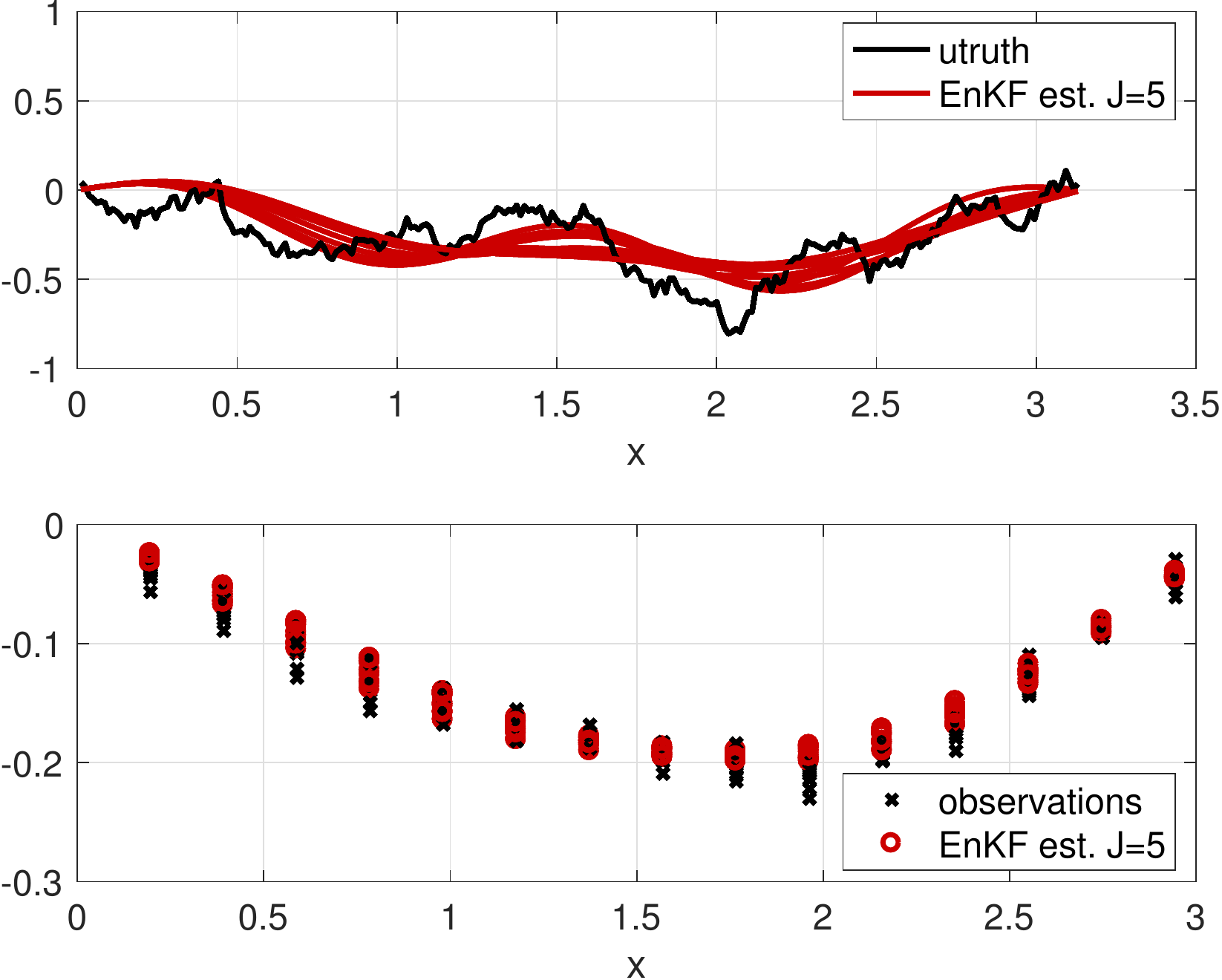}
    ~\\[-0.75cm]\caption{\footnotesize \label{fig:disc5sol}
Comparison of the \cls{ensemble Kalman inversion} estimate with the truth and the observations with discrepancy stopping rule, $J=5$ based on KL expansion  of ${C_0}=10(-\Delta)^{-1}$ (red), $K=2^4-1$, $10$ randomly initialised ensembles, $10$ randomly perturbed observation.}
 \end{figure}
 \end{minipage}
~
\begin{minipage}{0.45\textwidth}
 \begin{figure}[H]
\centering
     \includegraphics[width=\textwidth]{./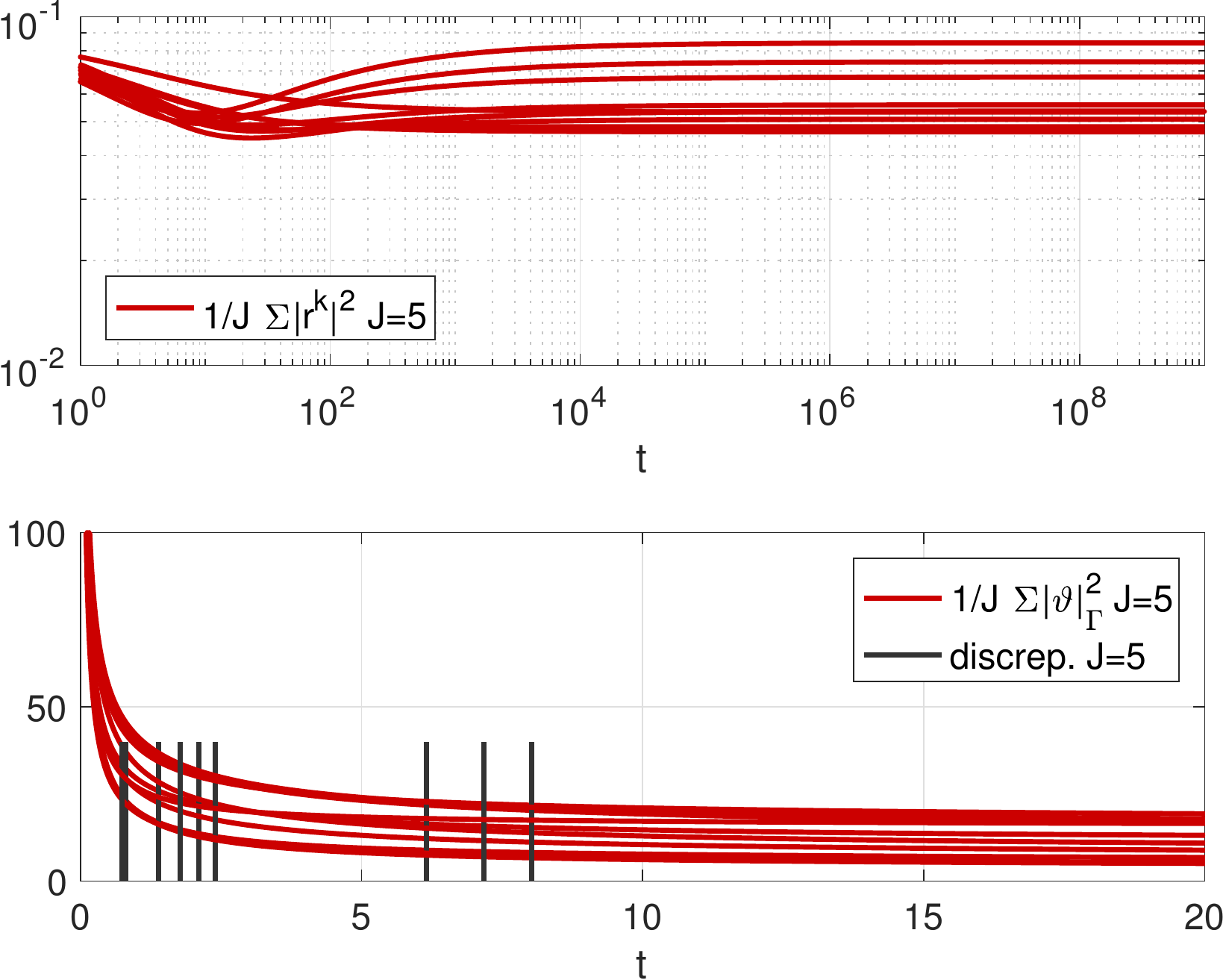}
~\\[-0.75cm]\caption{\footnotesize\label{fig:disc5}
Mapped residuals $|A\bar r|_\Gamma^2$ (above) and $|\bar \vartheta|_\Gamma^2$ with discrepancy stopping rule (below) w.r. to time $t$, $J=5$ based on KL expansion  of ${C_0}=10(-\Delta)^{-1}$ (red),  $K=2^4-1$, $10$ randomly initialised ensembles, $10$ randomly perturbed observation.}
 \end{figure}
\end{minipage}

 \begin{minipage}{0.45\textwidth}
 \begin{figure}[H]
\centering
     \includegraphics[width=\textwidth]{./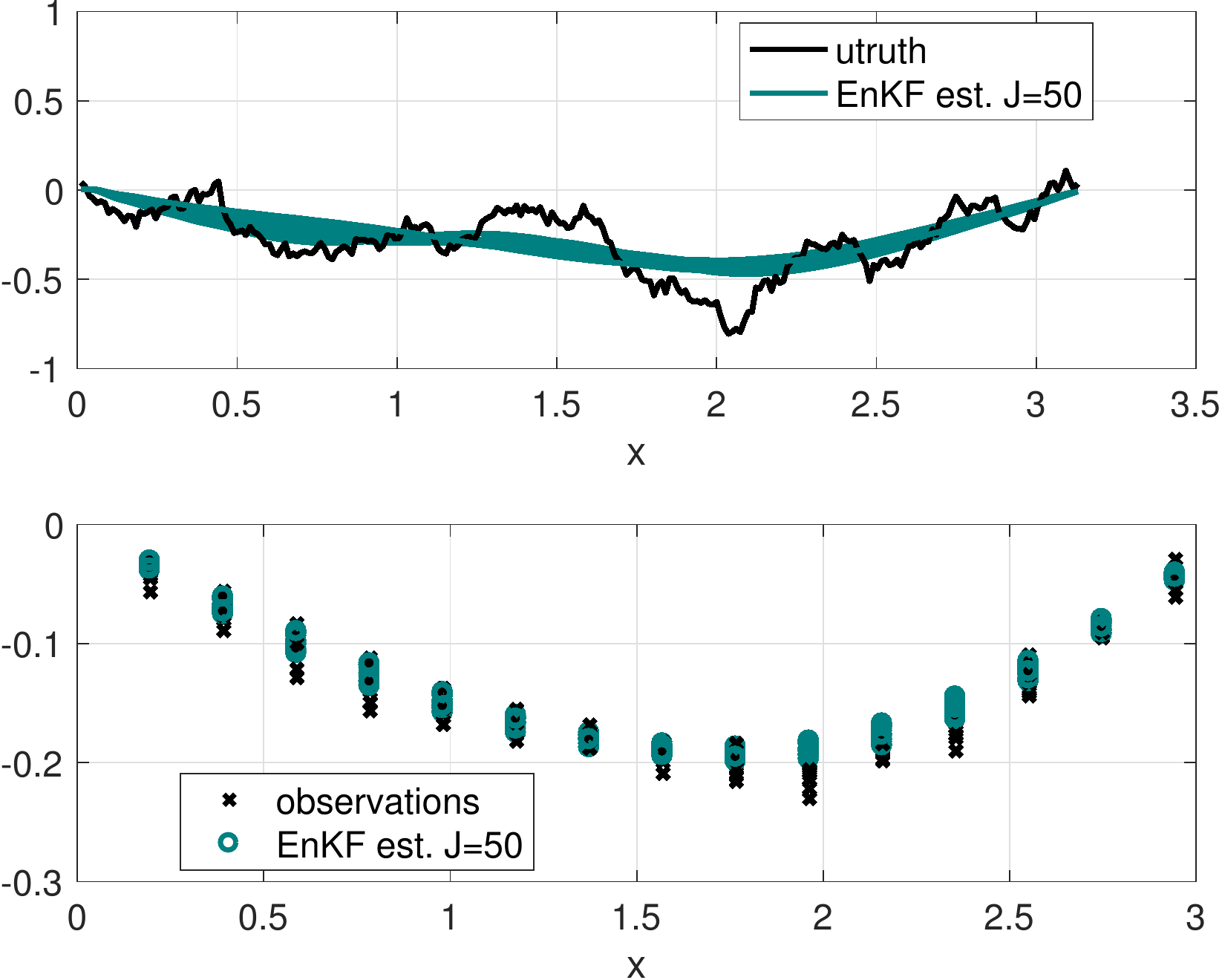}
    ~\\[-0.75cm]\caption{\footnotesize \label{fig:disc50sol}
Comparison of the \cls{ensemble Kalman inversion} estimate (with stopping rule) with the truth and the observations with discrepancy stopping rule, $J=50$ based on KL expansion  of ${C_0}=10(-\Delta)^{-1}$ (blue), $K=2^4-1$, $10$ randomly initialised ensembles, $10$ randomly perturbed observation.}
 \end{figure}
 \end{minipage}
~
\begin{minipage}{0.45\textwidth}
 \begin{figure}[H]
\centering
     \includegraphics[width=\textwidth]{./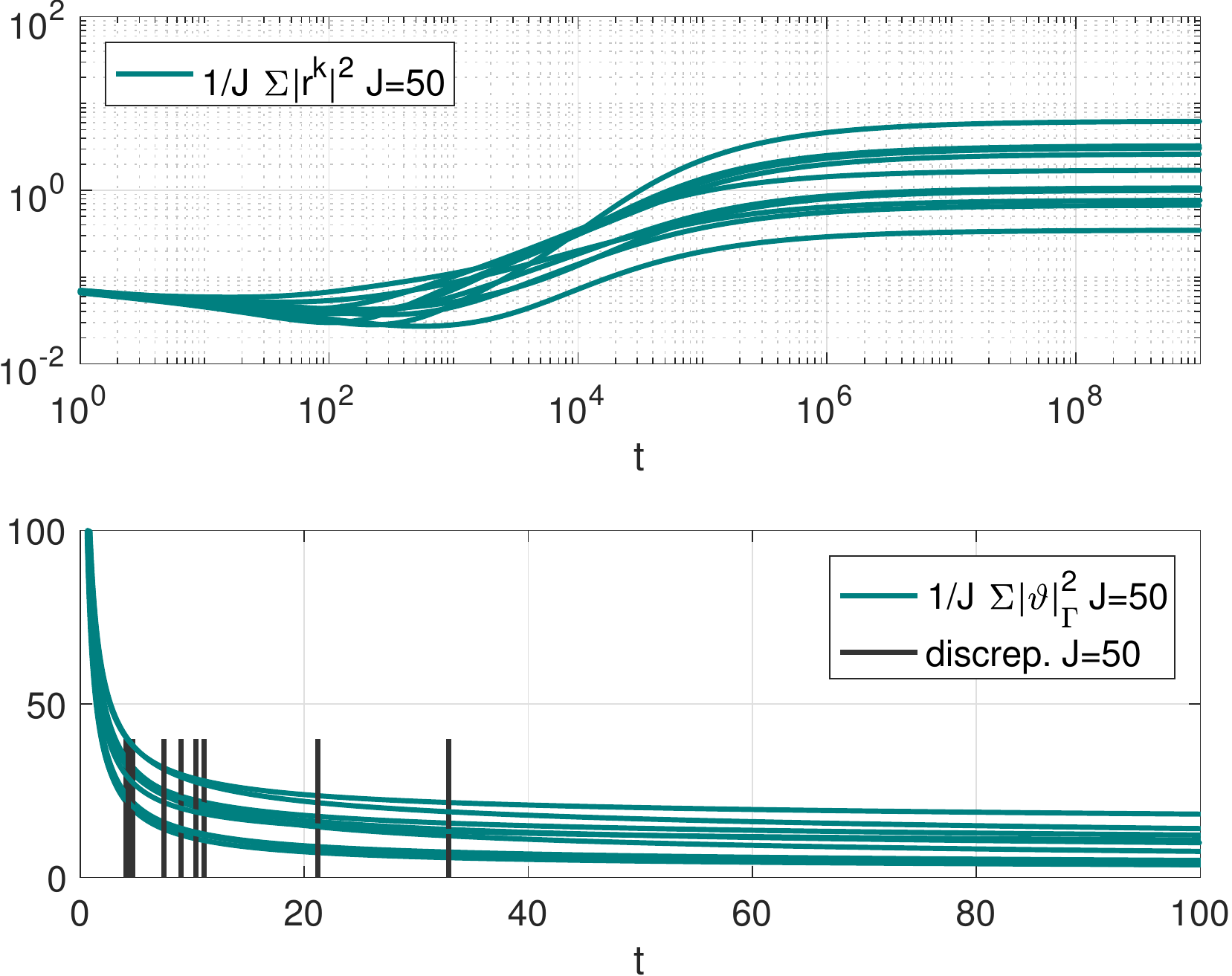}
~\\[-0.75cm]\caption{\footnotesize\label{fig:disc50}
Mapped residuals $|A\bar r|_\Gamma^2$ (above) and $|\bar \vartheta|_\Gamma^2$ with discrepancy stopping rule (below) w.r. to time $t$, $J=50$ based on KL expansion  of ${C_0}=10(-\Delta)^{-1}$ (blue),  $K=2^4-1$, $10$ randomly initialised ensembles, $10$ randomly perturbed observation.}
 \end{figure}
\end{minipage}

~\\
As the deterministic discrepancy principle is not well-defined in the high- and infinite dimensional setting, the experiments are repeated with the modified, symmetrised discrepancy principle. Results are presented in Figures \ref{fig:disc5solmod}-\ref{fig:disc50mod}.

 \begin{minipage}{0.45\textwidth}
 \begin{figure}[H]
\centering
     \includegraphics[width=\textwidth]{./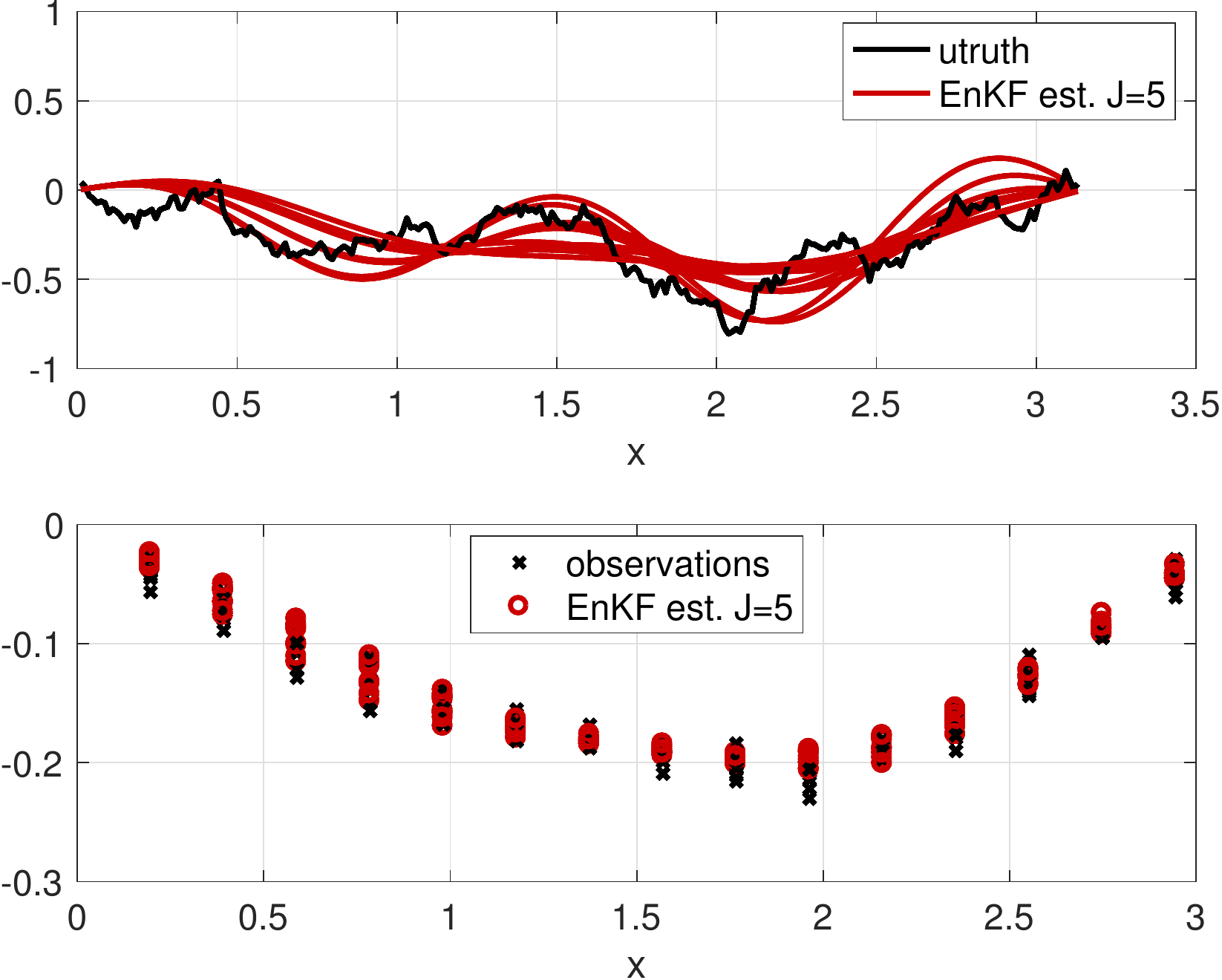}
    ~\\[-0.75cm]\caption{\footnotesize \label{fig:disc5solmod}
Comparison of the \cls{ensemble Kalman inversion} estimate with the truth and the observations with modified discrepancy stopping rule, $J=5$ based on KL expansion  of ${C_0}=10(-\Delta)^{-1}$ (red), $K=2^4-1$, $10$ randomly initialised ensembles, $10$ randomly perturbed observation, $\lambda=10^{-4}$.}
 \end{figure}
 \end{minipage}
~
\begin{minipage}{0.45\textwidth}
 \begin{figure}[H]
\centering
     \includegraphics[width=\textwidth]{./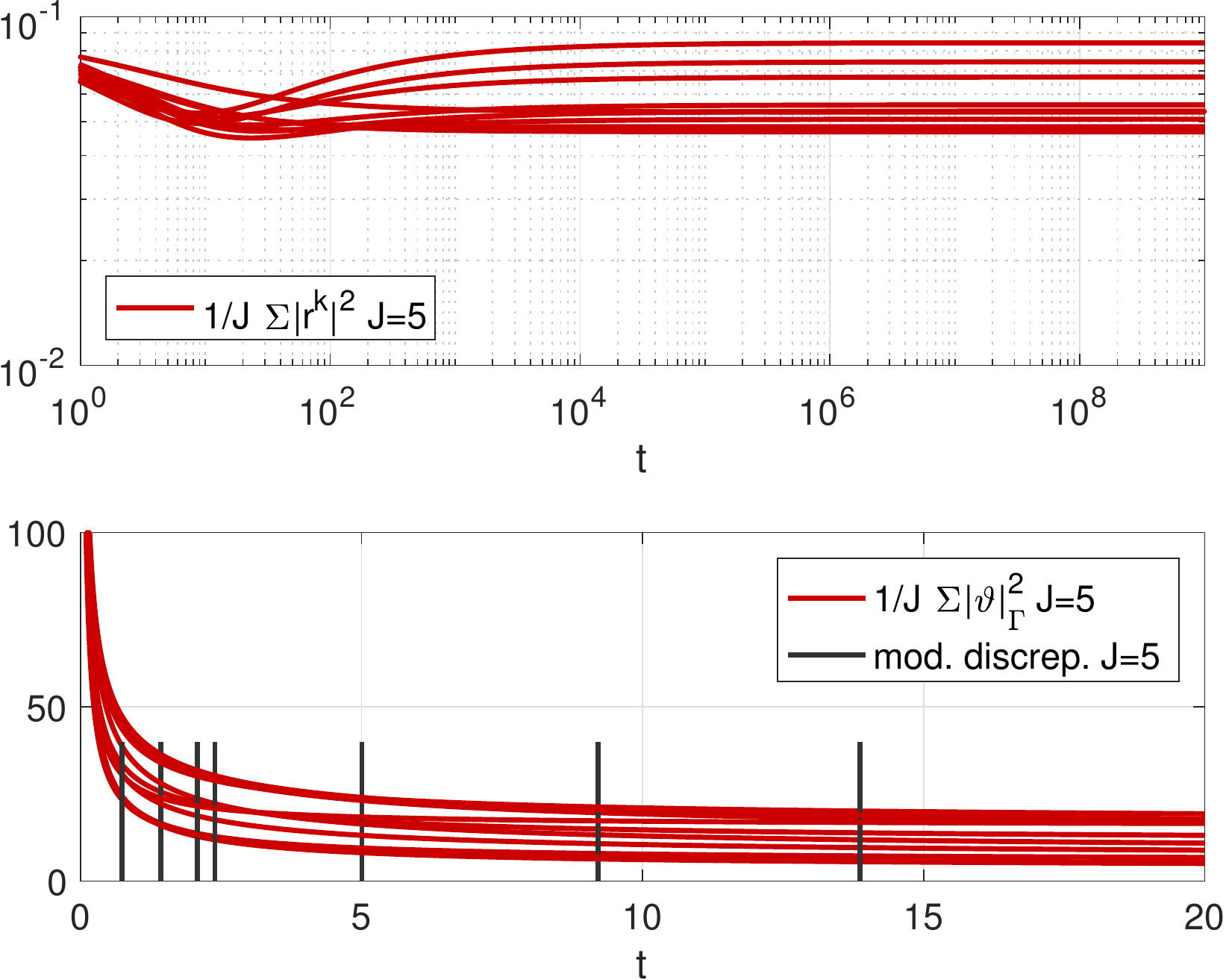}
~\\[-0.75cm]\caption{\footnotesize\label{fig:disc5mod}
Mapped residuals $|A\bar r|_\Gamma^2$ (above) and $|\bar \vartheta|_\Gamma^2$ with modified discrepancy stopping rule (below) w.r. to time $t$, $J=5$ based on KL expansion  of ${C_0}=10(-\Delta)^{-1}$ (red),  $K=2^4-1$, $10$ randomly initialised ensembles, $10$ randomly perturbed observation, $\lambda=10^{-4}$.}
 \end{figure}
\end{minipage}

 \begin{minipage}{0.45\textwidth}
 \begin{figure}[H]
\centering
     \includegraphics[width=\textwidth]{./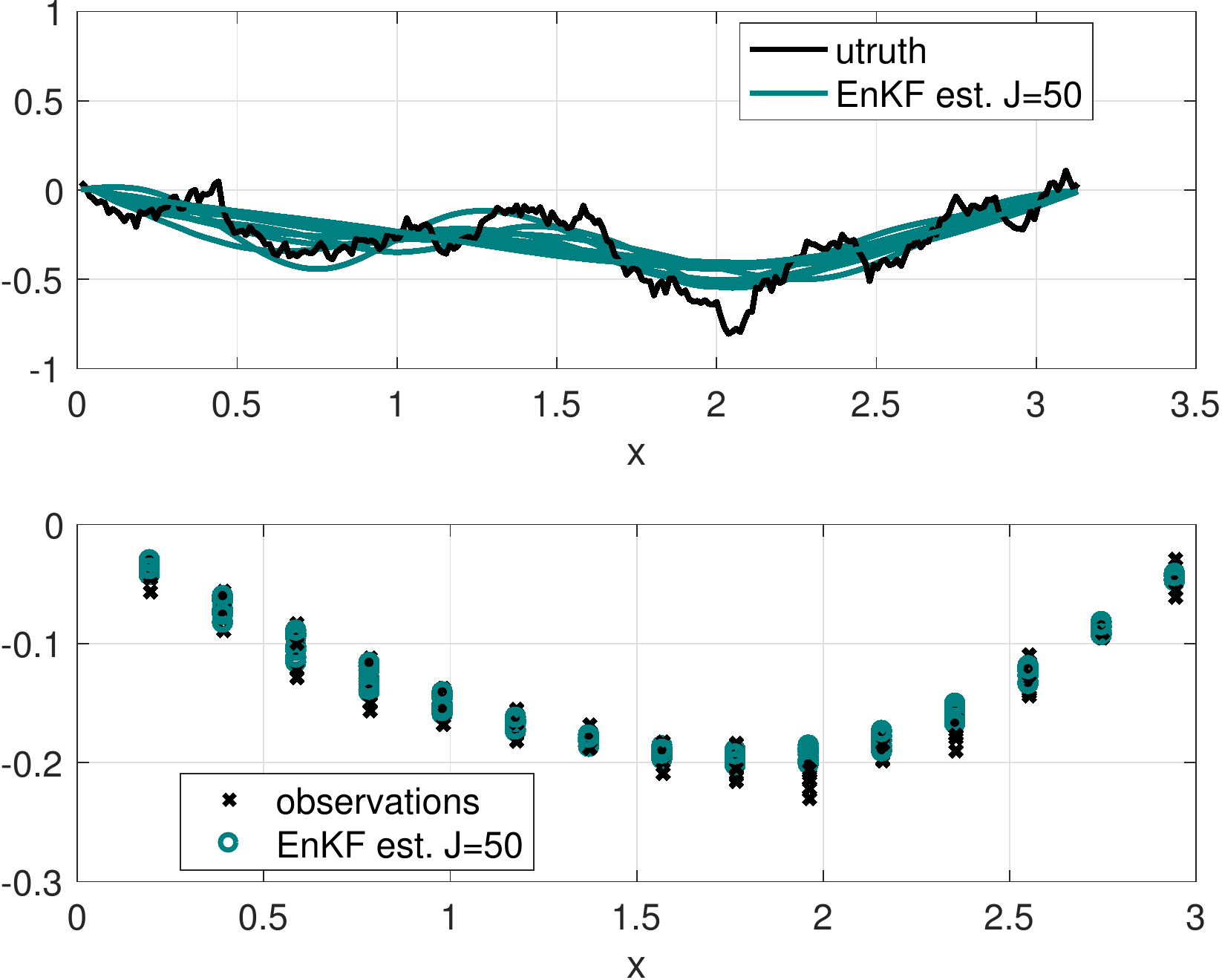}
    ~\\[-0.75cm]\caption{\footnotesize \label{fig:disc50solmod}
Comparison of the \cls{ensemble Kalman inversion} estimate (with stopping rule) with the truth and the observations with modified discrepancy stopping rule, $J=50$ based on KL expansion  of ${C_0}=10(-\Delta)^{-1}$ (blue), $K=2^4-1$, $10$ randomly initialised ensembles, $10$ randomly perturbed observation, $\lambda=10^{-4}$.}
 \end{figure}
 \end{minipage}
~
\begin{minipage}{0.45\textwidth}
 \begin{figure}[H]
\centering
     \includegraphics[width=\textwidth]{./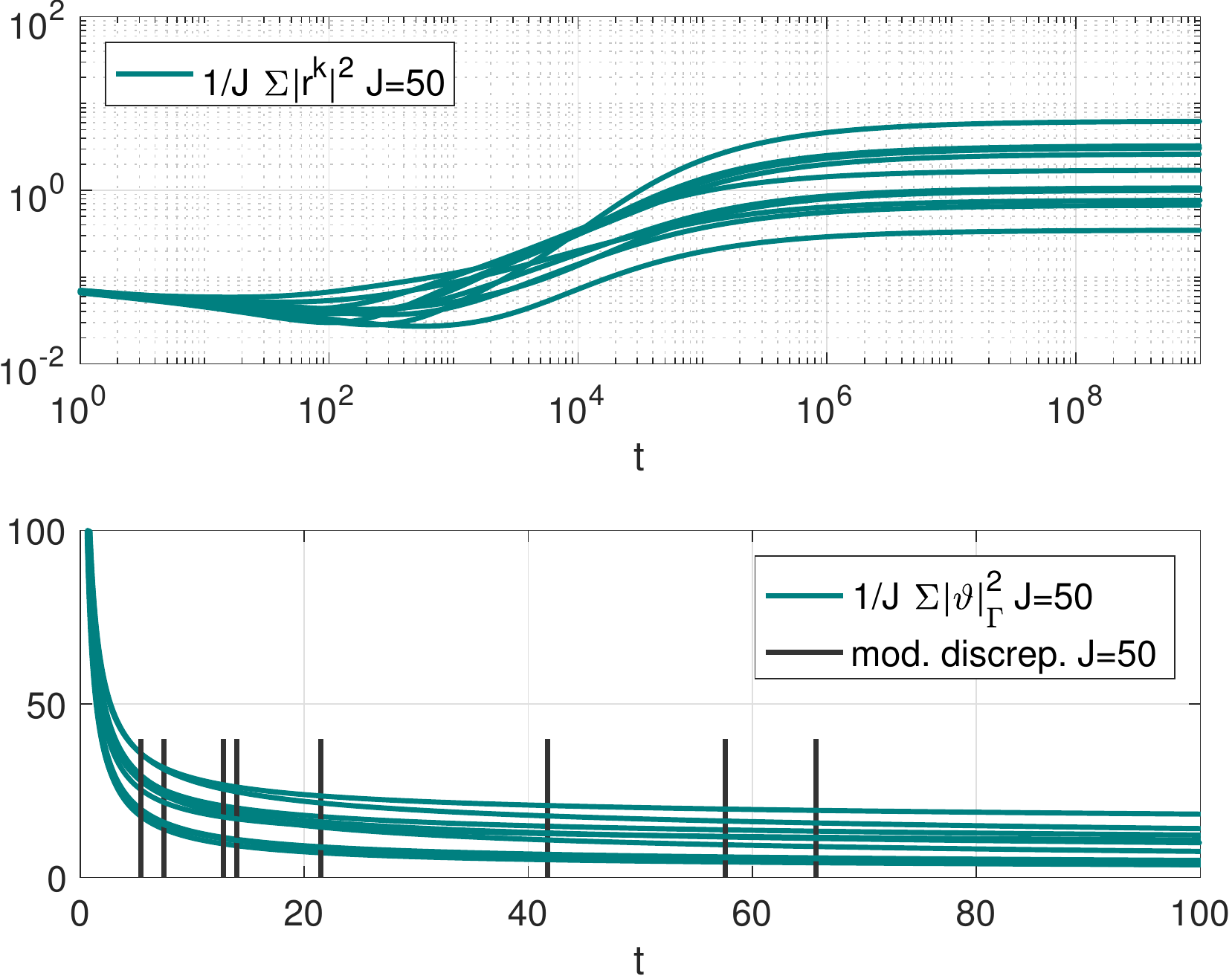}
~\\[-0.75cm]\caption{\footnotesize\label{fig:disc50mod}
Mapped residuals $|A\bar r|_\Gamma^2$ (above) and $|\bar \vartheta|_\Gamma^2$ with modified discrepancy stopping rule (below) w.r. to time $t$, $J=50$ based on KL expansion  of ${C_0}=10(-\Delta)^{-1}$ (blue),  $K=2^4-1$, $10$ randomly initialised ensembles, $10$ randomly perturbed observation, $\lambda=10^{-4}$.}
 \end{figure}
\end{minipage}

~\\
~\\
We observe that the overfitting effect is much more pronounced for the larger ensemble of size $50$, cp. the empirical residuals in Figure \ref{fig:disc5} and Figure \ref{fig:disc50}. The KL expansion of the first $50$ terms includes more fine-scale (oscillatory) details, which can be fitted to the noise in the observational data and therefore cause the overfitting effect. The smaller ensemble based on the first $5$ terms of the KL expansion avoids the overfitting effect due to the smaller ensemble size leading to a faster ensemble collapse, but also due to the smoothness of the first KL terms, i.e. the subspace property preserves the smoothness of the KL terms. Furthermore, we note that the discrepancy principle leads in all experiments to a stopping time larger than $1$ (Bayesian stopping rule), which leads for all experiments to a further improvement in the estimate. Due to the delayed ensemble collapse, the stopping times for the larger ensemble are on average greater than the ones for the smaller ensemble. The experiments suggest that an a posteriori stopping rule can significantly improve the performance of the \cls{ensemble Kalman inversion}. This observation is consistent with previous works on stopping rules for \cls{ensemble Kalman inversion}, cp. \cite{iglesias2014iterative}. 

\section{Conclusions}

The presented analysis of the ensemble Kalman filter for inverse problems shows that the well-posedness results and the quantification of the ensemble collapse derived in \cite{AndrewC2017} can be straightforwardly generalised to the noisy case. However, the convergence behaviour of the ensemble is strongly affected by the noise in the observational data and no convergence rate of the \cls{mapped }residuals can be proven: the convergence rate can be arbitrarily slow. The numerical experiments confirm the theory. In addition, the numerical experiments demonstrate the importance of an appropriate stopping rule in the presence of noise in order to avoid the well-known overfitting effect. It is also shown that the ensemble itself has a regularisation effect, caused by the ensemble collapse as well as by the chosen initialisation of the ensemble in terms of the KL expansion. \cls{Variants of the methods such as variance inflation or localisation may delay or prevent the ensemble collapse, thus they strongly influence the regularisation of the method. Stopping rules need to take this into account in order to avoid overfitting; however, the optimal strategy to use may strongly depend on the variant of
the algorithm which is used.} Even though the presented results are confined to the linear case, they provide useful insights into the performance of the filter in the presence of noise and can also enhance our understanding of the nonlinear case.  
~\\
~\\
\noindent{\bf Acknowledgments} Both authors are grateful to the careful reading,
and suggestions, of two referees. They are also grateful to the 
EPSRC Programme Grant EQUIP for funding of this research. AMS is 
also thanks DARPA (W911NF-15-2-0121) and ONR (N00014-17-1-2079)
for funding parts of this research.\\[0.55cm]

\bibliography{ref}
\bibliographystyle{siam}
\end{document}